\theoremstyle{thmstyleone}%
\newtheorem{theorem}{Theorem}
\newtheorem{proposition}[theorem]{Proposition}%
\newtheorem{fact}{Fact}
\newtheorem{condition}{Condition}
\newtheorem{lemma}{Lemma}
\theoremstyle{thmstyletwo}%
\newtheorem{remark}{Remark}%
\theoremstyle{thmstylethree}%
\newtheorem{definition}{Definition}%
\begin{document}

\title{Linear Convergence of the Proximal Gradient Method for Composite Optimization Under the Polyak-{\L}ojasiewicz Inequality and Its Variant}


\author[1]{\fnm{Qingyuan} \sur{Kong}}

\author*[1]{\fnm{Rujun} \sur{Jiang}}\email{rjjiang@fudan.edu.cn}

\author[1]{\fnm{Yihan} \sur{He}}

\affil[1]{\orgdiv{School of Data Science}, \orgname{Fudan University}, \orgaddress{\postcode{200433}, \state{Shanghai}, \country{China}}}






\abstract{We study the linear convergence rates of the proximal gradient method for composite functions satisfying two classes of Polyak-{\L}ojasiewicz (PL) inequality: the PL inequality, the variant of PL inequality defined by the proximal map-based residual. Using the performance estimation problem, we either provide new explicit linear convergence rates or improve existing complexity bounds for minimizing composite functions under the two classes of PL inequality. Finally, we illustrate numerically the effects of our theoretical results.}

\keywords{Proximal gradient methods, Linear convergence rates, Performance estimation problem, Polyak-{\L}ojasiewicz inequality}



\maketitle

\section{Introduction}\label{sec1}

In this paper, we consider the following composite minimization problem
\begin{equation} \label{pb:pgm}
\min_{x\in\mathbb{R}^d} F(x):=f(x)+h(x),
\end{equation}
where $f: \mathbb{R}^d \to\mathbb{R}$ is a function with Lipschitz continuous gradient, and $h: \mathbb{R}^d \to\mathbb{R} \cup \{+\infty\}$ is a closed, proper and convex function. In addition, we assume that the optimal set $X_{*}(F) := \operatorname*{argmin}_{x\in\mathbb{R}^d} F(x)$ is nonempty.
We denote the class of closed and proper convex functions by $\mathcal{F}_{0,\infty}(\mathbb{R}^{d})$, the class of $L$-smooth convex functions by $\mathcal{F}_{0, L}(\mathbb{R}^{d})$, and the class of $L$-smooth functions by $\mathcal{F}_{-L, L}(\mathbb{R}^{d})$.
For simplicity, we say that $F$ is a nonconvex composite function if $f\in\mathcal{F}_{-L, L}(\mathbb{R}^{d})$ and a convex composite function if $f\in\mathcal{F}_{0, L}(\mathbb{R}^{d})$.

The composite optimization problems arise in numerous applications, including compressed sensing \cite{donoho2006compressed}, signal processing \cite{combettes2011proximal}, and machine learning \cite{mosci2010solving}. In recent years, the proximal gradient method (PGM) is a widely used method for composite optimization \cite{beck2017first}.
Define the proximal operator of $h$ at $x \in \mathbb{R}^d$ as
\[
\mathrm{prox}_{h}(x):={\operatorname*{argmin}}_{y\in\mathbb{R}^{d}}\:\left\{h(y)+\frac{1}{2}\|y-x\|^{2}\right\}.
\]
We summarise the PGM in Algorithm \ref{alg:the PGM}.
\begin{algorithm}[H] \label{alg:the PGM}
\caption{Proximal gradient method (PGM)}
\begin{algorithmic}[1]
\Require $x_{1}\in\mathbb{R}^{d}$, $\gamma \in (0,\frac{2}{L})$.
\For {$i=1,\ldots,N$}
\State $x_{i+1}=\mathrm{prox}_{\gamma h}\left(x_i-\gamma\nabla f(x_i)\right)$
\EndFor
\end{algorithmic}
\end{algorithm}

It is well-known that for composite functions with strongly convex $f$, the PGM achieves a linear convergence rate \cite{beck2017first}.
Relaxing the strong convexity condition while retaining a linear convergence rate of the PGM has been extensively studied in the literature.
The PL inequality was initially introduced by Polyak \cite{polyak1963gradient}, which is used to show linear convergence of gradient descent. 
Bolte et al. \cite{bolte2017error} derived a linear convergence rate of the PGM under the Kurdyka-{\L}ojasiewicz (KL) inequality with an exponent of $\frac{1}{2}$ for convex composite functions. When the KL exponent is $\frac{1}{2}$, the KL inequality is known as PL inequality for nonsmooth functions.
Li and Pong \cite{li2018calculus} studied the calculus of the exponent of KL inequality and proved the linear convergence for the iterates of the PGM when the PL inequality holds.
Recently, Garrigos et al. \cite{garrigos2023convergence} provided an \emph{explicit} linear convergence rate of the PGM under the PL inequality. 
Zhang and Zhang \cite{zhang2019new} used a proximal map-based
residual to define the generalized PL inequality for convex composite functions, and showed an explicit linear convergence rate of the PGM under this inequality. 

Drori and Teboulle \cite{drori2014performance} first proposed the performance estimation problem (PEP) for the worst-case analysis of first-order methods, which is based on using a semidefinite programming  (SDP) relaxation.
Subsequently, PEP has been widely used for analyzing the worst-case convergence rates of various first-order methods, such as the gradient methods or their accelerated versions \cite{drori2014performance,de2017worst}, the PGM \cite{taylor2018exact}, the alternating direction method
of multipliers \cite{zamani2023exact}, and the difference-of-convex algorithm \cite{abbaszadehpeivasti2024rate}.
While the original SDP relaxation in \cite{drori2014performance} is based on a discrete relaxation of the PEP, Taylor et. al. \cite{taylor2017smooth, taylor2017exact} proposed an exact reformulation of the PEP by deriving the sufficient and necessary interpolation conditions for the function class $\mathcal F_{\mu, L}(\mathbb{R}^d)$ with $\mu\ge 0$ and $L\in(\mu,+\infty]$. 
For a composite function with a strongly convex $f$, Taylor et al. \cite{taylor2018exact} showed the exact worst-case linear convergence rates of the PGM in terms of objective function accuracy, distance to optimality, and residual gradient norm.

The main contribution of this paper is applying performance estimation to analyze the convergence of the PGM for composite functions under two classes of PL inequalities: the PL inequality, the variant of PL inequality defined by the proximal map-based residual, which we call RPL for simplicity. When the PL inequality holds, we provide the first \emph{explicit} linear convergence rate for nonconvex composite functions, and improve the existing convergence results in \cite{garrigos2023convergence} for convex composite functions. When the RPL inequality holds, we also provide the first \emph{explicit} linear convergence rate for nonconvex composite functions, and find a better bound than that given in \cite{zhang2019new} for convex composite functions. 
Furthermore, for the two classes of PL inequalities, we deduce the ``optimal'' constant step size based on the convergence rates obtained above.
We compare in Table \ref{table} the explicit worst-case convergence rates we obtained with the existing results in the literature.

\begin{table}[h]
\footnotesize
\caption{Comparison of different classes of functions and PL inequalities.}\label{table}
\renewcommand{\arraystretch}{2}
\begin{tabular}{@{\extracolsep\fill}clll}
\hline
\multicolumn{1}{l}{Fun. class} & Con. & Convergence rate & Existing result \\ \hline
\multirow{2}{*}{\makecell{$f \in \mathcal{F}_{0,L}(\mathbb{R}^{d})$, \\ $h \in \mathcal{F}_{0,\infty}(\mathbb{R}^{d})$}} & PL & $\max\{\frac{1}{2 \gamma \mu + 1}, \frac{(L\gamma - 1)^2}{(L\gamma - 1)^2 - L \gamma^{2} \mu + 2 \gamma \mu}\}$ & $\frac{1}{1+\gamma\mu(2-\gamma L)}$ \cite{garrigos2023convergence}    \\ \cmidrule{2-4}
& RPL & $\max\{\frac{1-\gamma \mu}{1+ \gamma \mu},\frac{-2L\gamma^2\mu+L\gamma+3\gamma\mu-2}{L\gamma-\gamma\mu-2},\frac{(L\gamma - 1)^2}{(L\gamma - 1)^2 - L \gamma^{2} \mu + 2 \gamma \mu}\}$ & $\frac{1-\gamma \mu}{1+\gamma \mu}$($\gamma \in (0,\frac{1}{L})$) \cite{zhang2019new}   \\ \hline
\multirow{2}{*}{\makecell{$f \in \mathcal{F}_{-L,L}(\mathbb{R}^{d})$,\\ $h \in \mathcal{F}_{0,\infty}(\mathbb{R}^{d})$}}   & PL & $\max\{\frac{(L\gamma + 1)^2}{(L\gamma + 1)^2 + L \gamma^{2} \mu + 2 \gamma \mu},  \frac{(L\gamma - 1)^2}{(L\gamma - 1)^2 - L \gamma^{2} \mu + 2 \gamma \mu}\}$ & /   \\ \cmidrule{2-4}
& RPL & $\frac{L+\mu\left(L\gamma-1\right)^2-\mu}L$ & /   \\
 \hline
\end{tabular}
\footnotetext{
In the table, $L$ is the Lipschitz constant of $\nabla f(x)$, $\mu$ is the constant in their classes of PL inequalities, and $\gamma$ is the step size.}
\end{table}

The rest of the paper is organized as follows. In Sect. \ref{sect2}, we provide preliminary definitions of PL inequalities and some interpolation conditions used in the proof.
In Sect. \ref{sect3}, we present explicit linear convergence rates of the PGM and derive its ``optimal'' step size under the PL inequality. Subsequently, in Sect. \ref{sect4}, we extend our analysis to the RPL inequality setting and also establish new linear convergence results. In Sect. \ref{sect5}, we verify the performance of the ``optimal'' step size in practical problems through numerical experiments. We end the paper with some conclusions in Sect. \ref{sect6}.

\textbf{Notation.}
For the $d$-dimensional Euclidean space $\mathbb{R}^d$, the notation $\left<\cdot,\cdot\right>$ and $||\cdot||$ is the Euclidean inner product and norm, respectively. The distance function is denoted by $d(x, X):= \inf_{y \in X}\|x-y\|$ for any nonempty $X \subset \mathbb{R}^d$.

\section{Preliminaries} \label{sect2}

We consider the convergence for the PGM under the following two settings, where the first considers nonconvex smooth function $f$, and the latter considers convex smooth $f$.
\begin{condition}\label{con1}
The function $f \in \mathcal{F}_{-L, L}(\mathbb{R}^d)$ and $h \in \mathcal{F}_{0,\infty}(\mathbb{R}^d)$.
\end{condition}
\begin{condition}\label{con2}
The function $f \in \mathcal{F}_{0, L}(\mathbb{R}^d)$ and $h \in \mathcal{F}_{0,\infty}(\mathbb{R}^d)$.
\end{condition}

We say that a smooth function $f$ satisfies the PL inequality if the following inequality holds for some $\mu > 0$,
\begin{equation} \label{eq:PL}
||\nabla f(x)||^{2}\geq 2\mu(f(x)-f_{*}),\quad\forall x \in \mathbb{R}^d,
\end{equation}
where $f_*$ denotes the global optimal function value. The PL inequality implies that each stationary point is a global minimum.

Let $\partial F(x)$ denote the Clarke subdifferential of a local Lipschitz function $F$ at $x$  \cite{clarke1990optimization}. 
We say the function $F=f+h$ satisfies the PL inequality if there exists $\mu>0$ such that
\begin{equation} \label{eq:sPL}
d(0,\partial F(x))^2 \geq {2\mu}(F(x)-F_{*}),\quad \forall x \in \mathbb{R}^d,
\end{equation}
where $\partial F(x) = \nabla f(x) + \partial h(x)$ because $f$ is smooth \cite[Proposition 2.3.3]{clarke1990optimization}.
Note that the PL inequality here is a special case of the KL inequality with exponent $\frac{1}{2}$ \cite{bolte2017error}.

For composite function $F$, Li and Li \cite{li2018simple} used proximal map-based residual to define the RPL inequality:
\begin{equation} \label{eq:RPL}
\exists\mu>0,\text{ such that }\|\mathcal{G}_\gamma(x)\|^2\geq2\mu(F(x)-F_{*}), \quad \forall x \in \mathbb{R}^d,
\end{equation}
where the proximal map-based residual is defined by
\begin{equation} \label{eq:gradm}
\mathcal{G}_\gamma(x):=\frac{1}{\gamma}\Big(x-\mathrm{prox}_{\gamma h}\big(x-\gamma\nabla f(x)\big)\Big). \nonumber
\end{equation}

The following lemma provides the relationship between the proximal map-based residual and the distance of the subdifferential to 0, which is helpful in our analysis.
\begin{lemma}\cite[Theorem 3.5]{drusvyatskiy2018error} \label{lemma}
Consider the composite function $F = f + h$. For any $x \in \mathbb{R}^d$, it holds that
$
\|\mathcal{G}_{\gamma}(x)\| \leq d(0,\partial F(x)). \nonumber
$
\end{lemma}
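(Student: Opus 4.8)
The plan is to work directly from the optimality condition defining the proximal step. Fix $x \in \mathbb{R}^d$ and write $x^{+} := \mathrm{prox}_{\gamma h}\big(x-\gamma\nabla f(x)\big)$, so that $\mathcal{G}_{\gamma}(x) = \frac{1}{\gamma}(x-x^{+})$ and $x^{+} = x-\gamma\,\mathcal{G}_{\gamma}(x)$. The first-order optimality condition for the strongly convex problem defining $x^{+}$ reads $x-\gamma\nabla f(x)-x^{+} \in \gamma\,\partial h(x^{+})$, which rearranges to $\mathcal{G}_{\gamma}(x)-\nabla f(x) \in \partial h(x^{+})$. So we have produced, essentially for free, one specific subgradient of $h$ — but evaluated at $x^{+}$ rather than at $x$.

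Next I would bring in an arbitrary element of $\partial F(x) = \nabla f(x)+\partial h(x)$: pick any $v \in \partial h(x)$. The key step is to compare the two subgradients $\mathcal{G}_{\gamma}(x)-\nabla f(x)\in\partial h(x^{+})$ and $v\in\partial h(x)$ through the monotonicity of the subdifferential of the convex function $h$, giving $\big\langle (\mathcal{G}_{\gamma}(x)-\nabla f(x))-v,\, x^{+}-x\big\rangle \ge 0$. Substituting $x^{+}-x = -\gamma\,\mathcal{G}_{\gamma}(x)$ and dividing by $\gamma>0$ yields $\big\langle (\mathcal{G}_{\gamma}(x)-\nabla f(x))-v,\, \mathcal{G}_{\gamma}(x)\big\rangle \le 0$, that is, $\|\mathcal{G}_{\gamma}(x)\|^{2} \le \big\langle \nabla f(x)+v,\, \mathcal{G}_{\gamma}(x)\big\rangle$.

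Finally, applying the Cauchy--Schwarz inequality to the right-hand side gives $\|\mathcal{G}_{\gamma}(x)\|^{2} \le \|\nabla f(x)+v\|\,\|\mathcal{G}_{\gamma}(x)\|$; dividing by $\|\mathcal{G}_{\gamma}(x)\|$ (the case $\mathcal{G}_{\gamma}(x)=0$ being trivial) yields $\|\mathcal{G}_{\gamma}(x)\| \le \|\nabla f(x)+v\|$. Since $v\in\partial h(x)$ was arbitrary, taking the infimum over all such $v$ produces $\|\mathcal{G}_{\gamma}(x)\| \le \inf_{v\in\partial h(x)}\|\nabla f(x)+v\| = d\big(0,\nabla f(x)+\partial h(x)\big) = d(0,\partial F(x))$.

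I expect the only genuinely delicate point to be the bookkeeping of where $h$'s subgradient lives — the prox optimality condition delivers a subgradient at $x^{+}$, not at $x$ — and recognizing that monotonicity of $\partial h$ is precisely the tool that bridges this gap, requiring no smoothness or extra regularity of $h$. Everything else is a single Cauchy--Schwarz estimate followed by an infimum; in particular $\nabla f(x)$ never has to be "moved," since it appears at the same base point on both sides of the inner product and cancels cleanly.
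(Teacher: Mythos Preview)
The paper does not supply its own proof of this lemma; it simply cites \cite[Theorem 3.5]{drusvyatskiy2018error}. Your argument is correct and self-contained: the prox optimality condition gives $\mathcal{G}_{\gamma}(x)-\nabla f(x)\in\partial h(x^{+})$, monotonicity of $\partial h$ against an arbitrary $v\in\partial h(x)$ yields $\|\mathcal{G}_{\gamma}(x)\|^{2}\le\langle\nabla f(x)+v,\mathcal{G}_{\gamma}(x)\rangle$, and Cauchy--Schwarz plus an infimum over $v$ finishes it. The only edge case worth a word is $\partial h(x)=\varnothing$, in which case $d(0,\partial F(x))=+\infty$ by convention and the inequality is vacuous; otherwise everything goes through exactly as you wrote.
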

By Lemma~\ref{lemma}, it is clear that the RPL inequality \eqref{eq:RPL} implies the PL inequality \eqref{eq:sPL}, since
\begin{equation}
F(x) - F_* \leq \frac{1}{2\mu}\|\mathcal{G}_{\gamma}(x)\|^2 \leq \frac{1}{2\mu} d(0,\partial F(x))^2. \nonumber
\end{equation}
However, it appears challenging to derive the converse implication from the PL inequality to the RPL inequality.

Our analysis heavily depends on the PEP technique, which can be summarised as the following procedure:
(1) The abstract PEP problem can be discretized as a finite-dimensional optimization problem, where interpolation conditions should be used if available.
(2) By introducing the Gram matrix, the finite-dimensional optimization problem is relaxed as a convex SDP problem, which is solvable via various off-the-shelf solvers.
(3) Optimal dual solutions are first obtained numerically, and then the analytical expressions are guessed and verified analytically.
(4) Finally,  the worst-case convergence rate is obtained by using nonnegative linear combinations of constraints in the discretized formulation of the PEP problem.

We denote first-order oracle at a given point $x$ for function $F$ by $\mathcal{O}_F(x)=\{x,F(x),\tilde{\nabla}F(x)\}$, where $\tilde{\nabla}F(x) \in \partial F(x)$ is a Clarke subgradient of $F$ at $x$ (or gradient if $F$ is differentiable at $x$). We consider a method $\mathcal{M}$ that executes $N$ steps of iterations $\{x_i\}_{1 \leq i \leq N}$ starting from the initial iterate $x_1$, where the $i+1$-th iteration generated by $\mathcal{M}$ can be written as $x_{i+1} = \mathcal{M}_{i+1}(x_1, \mathcal{O}_F(x_1), \ldots,\mathcal{O}_F(x_i))$. Let $I = \{1, \ldots, N, *\}$ be an index set containing an optimal solution $x_*$ of the problem. At each iterate $x_i$, we denote the output of the oracle by $\{x_i, g_i, f_i\}$, where $f_i = F(x_i)$ and $g_i \in \partial F(x_i)$.
Based on the above concepts, we introduce the definition of $\mathcal{F}$-interpolation. 
\begin{definition}\cite[Definition 2]{taylor2017smooth}
Let $I = \{1, \ldots, N, *\}$ be an index set, and consider the set of triples $\{(x_i, g_i, f_i)\}_{i \in I}$, where $x_i\in \mathbb{R}^d$, $g_i\in \mathbb{R}^d$ and $f_i\in \mathbb{R}$, $\forall i \in I$. The set of triples $\{(x_i, g_i, f_i)\}_{i \in I}$ is $\mathcal{F}$-interpolable if and only if there exists a function $F \in \mathcal{F}$ such that for all $i \in I$, we have both $f_i=F(x_i)$ and $g_i \in \partial F(x_i)$.
\end{definition}
Recall the following interpolation conditions, which play a key role in our analysis.

\begin{fact}\cite[Theorem 3.10]{taylor2017exact} \label{fact1}
The set $\{(x_i;g_i;f_i)\}_{i\in I}$ is  $\mathcal{F}_{-L, L}$-interpolable if and only if the following inequality holds $\forall i,j\in I$:
\begin{equation} \label{subeq:L-interpo}
A_{i,j}=f_{j}-f_{i}-\frac{L}{4}\|x_{i}-x_{j}\|^{2}+\frac{1}{2}\langle g_{i}+g_{j},x_{j}-x_{i}\rangle+\frac{1}{4L}\|g_{i}-g_{j}\|^{2}\le 0.
\end{equation}
\end{fact}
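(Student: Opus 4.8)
The plan is to reduce the statement to the interpolation theorem for smooth \emph{convex} functions by means of a quadratic shift, and then to unwind the change of variables. The key observation is the elementary equivalence $f\in\mathcal{F}_{-L,L}(\mathbb{R}^d)\iff \tilde f:=f+\tfrac{L}{2}\|\cdot\|^2\in\mathcal{F}_{0,2L}(\mathbb{R}^d)$, which I would prove from the two-sided descent inequality $\big|\,f(y)-f(x)-\langle\nabla f(x),y-x\rangle\,\big|\le\tfrac{L}{2}\|y-x\|^2$, valid for every $L$-smooth $f$: its lower half rearranges to convexity of $\tilde f$, while the triangle inequality shows that $x\mapsto\nabla f(x)+Lx$ is $2L$-Lipschitz; the converse implication uses cocoercivity of $\nabla\tilde f$ (a consequence of convexity together with $2L$-smoothness) to verify that $x\mapsto\nabla\tilde f(x)-Lx$ is again $L$-Lipschitz. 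The map $F\mapsto F+\tfrac{L}{2}\|\cdot\|^2$ is then a bijection from $\mathcal{F}_{-L,L}(\mathbb{R}^d)$ onto $\mathcal{F}_{0,2L}(\mathbb{R}^d)$ under which $F$ interpolates $\{(x_i,g_i,f_i)\}_{i\in I}$ exactly when $F+\tfrac{L}{2}\|\cdot\|^2$ interpolates $\{(x_i,\tilde g_i,\tilde f_i)\}_{i\in I}$, with $\tilde g_i:=g_i+Lx_i$ and $\tilde f_i:=f_i+\tfrac{L}{2}\|x_i\|^2$. So the $\mathcal{F}_{-L,L}$-interpolability of $\{(x_i,g_i,f_i)\}_{i\in I}$ is equivalent to the $\mathcal{F}_{0,2L}$-interpolability of the shifted set.

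The second ingredient is the smooth convex interpolation theorem: $\{(x_i,\tilde g_i,\tilde f_i)\}_{i\in I}$ is $\mathcal{F}_{0,M}$-interpolable if and only if $\tilde f_i\ge\tilde f_j+\langle\tilde g_j,x_i-x_j\rangle+\tfrac{1}{2M}\|\tilde g_i-\tilde g_j\|^2$ for all $i,j\in I$. \emph{Necessity} is the classical inequality obtained by applying the descent lemma to the convex $M$-smooth function $z\mapsto\tilde f(z)-\langle\tilde g_j,z\rangle$, which is minimized at $x_j$, and then setting $z=x_i$. For \emph{sufficiency} I would use the standard conjugacy construction: define the finite-valued, $\tfrac{1}{M}$-strongly convex function
\begin{equation*}
\phi(s):=\max_{i\in I}\Big\{\langle x_i,s\rangle-\tilde f_i+\tfrac{1}{2M}\|s-\tilde g_i\|^2\Big\},
\end{equation*}
and set $\tilde f:=\phi^{*}$, which is then finite, convex and $M$-smooth on all of $\mathbb{R}^d$. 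The interpolation inequality is precisely what guarantees that the $j$-th term of $\phi$ is active at $s=\tilde g_j$; hence $\phi(\tilde g_j)=\langle x_j,\tilde g_j\rangle-\tilde f_j$ and $x_j\in\partial\phi(\tilde g_j)$, and Fenchel duality turns these into $\tilde f(x_j)=\tilde f_j$ and $\nabla\tilde f(x_j)=\tilde g_j$, so $\tilde f$ interpolates the data.

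To finish, set $M=2L$ and substitute $\tilde g_i=g_i+Lx_i$, $\tilde f_i=f_i+\tfrac{L}{2}\|x_i\|^2$ into the smooth convex interpolation inequality: the purely quadratic terms in the $x_i$'s collapse, since $\tfrac{L}{2}\|x_i\|^2-\tfrac{L}{2}\|x_j\|^2-L\langle x_i,x_j\rangle+L\|x_j\|^2-\tfrac{L}{4}\|x_i-x_j\|^2=\tfrac{L}{4}\|x_i-x_j\|^2$, and the mixed terms in $g$ combine with the cross term of $\tfrac{1}{4L}\|\tilde g_i-\tilde g_j\|^2$ into a symmetric term of the form $\tfrac12\langle g_i+g_j,\cdot\rangle$, leaving exactly $A_{i,j}\le 0$. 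I expect the main obstacle to be the sufficiency half of the smooth convex interpolation theorem — verifying that $\phi^{*}$ genuinely reproduces the prescribed values and (sub)gradients (the active-piece and subdifferential bookkeeping, together with finiteness and $M$-smoothness of $\phi^{*}$); the quadratic shift and the concluding algebra are routine.
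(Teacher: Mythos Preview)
The paper does not supply its own proof of this fact; it is quoted verbatim from \cite[Theorem~3.10]{taylor2017exact}, so there is no in-paper argument to compare against. Your plan---shift by $\tfrac{L}{2}\|\cdot\|^{2}$ to set up a bijection $\mathcal{F}_{-L,L}\leftrightarrow\mathcal{F}_{0,2L}$, invoke the smooth convex interpolation theorem (Fact~\ref{fact2} with Lipschitz constant $2L$), and then undo the shift---is correct and is exactly the route taken in the cited reference. Your sketches of both directions of the convex interpolation theorem (descent-lemma necessity, and sufficiency via the conjugate of a pointwise maximum of shifted quadratics) are the standard ones from that paper as well.

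One caveat worth flagging: when you actually carry out the substitution $\tilde g_i=g_i+Lx_i$, $\tilde f_i=f_i+\tfrac{L}{2}\|x_i\|^2$ into the $\mathcal{F}_{0,2L}$ condition, the algebra produces
\[
f_{j}-f_{i}-\tfrac{L}{4}\|x_{i}-x_{j}\|^{2}+\tfrac{1}{2}\langle g_{i}+g_{j},\,x_{i}-x_{j}\rangle+\tfrac{1}{4L}\|g_{i}-g_{j}\|^{2}\le 0,
\]
i.e.\ the cross term carries $x_i-x_j$, not $x_j-x_i$ as printed in \eqref{subeq:L-interpo}. The displayed formula in the paper has a sign typo: for $f(x)=\tfrac{L}{2}x^{2}$ with $x_1=0$, $x_2=1$ one gets $A_{1,2}=L>0$ under the printed version, so genuine $L$-smooth data would violate it. Your derivation yields the correct inequality; just do not be surprised when the final expression fails to match \eqref{subeq:L-interpo} symbol for symbol.
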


\begin{fact}\cite[Corollary 1]{taylor2017smooth} \label{fact2}
The set $\{(x_i;g_i;f_i)\}_{i\in I}$ is  $\mathcal{F}_{0, L}$-interpolable if and only if the following inequality holds $\forall i,j\in I$:
 \begin{equation} \label{eq:Lconv}
B_{i,j} := f_{j} - f_{i} + \langle g_{j},x_{i}-x_{j}\rangle + \frac{1}{2L}\|g_{i}-g_{j}\|^{2} \leq 0.
\end{equation}
\end{fact}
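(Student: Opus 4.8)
\textbf{Proof proposal for Fact \ref{fact2}.} The plan is to prove the two implications of the equivalence separately: necessity is a direct consequence of a classical inequality valid for every $L$-smooth convex function, while sufficiency requires an explicit interpolant, which I would obtain by a Legendre--Fenchel conjugation that reduces the problem to the (elementary) interpolation problem for the class $\mathcal{F}_{0,\infty}(\mathbb{R}^d)$.

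For \emph{necessity}, suppose there is $F\in\mathcal{F}_{0,L}(\mathbb{R}^d)$ with $f_i=F(x_i)$ and $g_i=\nabla F(x_i)$ for all $i\in I$ (note $F$ is differentiable, being $L$-smooth). I would use the inequality $F(x)-F(y)-\langle\nabla F(y),x-y\rangle\ge\frac{1}{2L}\|\nabla F(x)-\nabla F(y)\|^2$ for all $x,y$. To keep the argument self-contained I would derive it by fixing $y$, setting $\phi(x):=F(x)-\langle\nabla F(y),x\rangle$, noting that $\phi\in\mathcal{F}_{0,L}(\mathbb{R}^d)$ and $\nabla\phi(y)=0$ so $y$ minimizes $\phi$, and then applying the descent lemma at the point $x-\frac{1}{L}\nabla\phi(x)$ to get $\phi(y)\le\phi(x)-\frac{1}{2L}\|\nabla\phi(x)\|^2$. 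Unpacking $\phi$ and substituting $x=x_i$, $y=x_j$ gives $f_i-f_j-\langle g_j,x_i-x_j\rangle\ge\frac{1}{2L}\|g_i-g_j\|^2$, which is precisely $B_{i,j}\le 0$.

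For \emph{sufficiency}, the subtle point is that a naive maximum of affine-plus-quadratic pieces is convex but generally not $L$-smooth, so I would route through conjugation. Recall that $F\in\mathcal{F}_{0,L}$ with $g=\nabla F(x)$ is equivalent to $F^*\in\mathcal{F}_{1/L,\infty}$ with $x=\nabla F^*(g)$ and $F(x)+F^*(g)=\langle x,g\rangle$, and that subtracting $\frac{1}{2L}\|\cdot\|^2$ from $F^*$ further reduces $\frac1L$-strong convexity to plain convexity. Tracking the data through these transformations sends $\{(x_i,g_i,f_i)\}_{i\in I}$ to the triples $\{(g_i,\,x_i-\tfrac1L g_i,\,\langle x_i,g_i\rangle-f_i-\tfrac{1}{2L}\|g_i\|^2)\}_{i\in I}$; a short computation then shows that the elementary $\mathcal{F}_{0,\infty}$-interpolability condition for these transformed triples (every data point lies above the supporting affine function at every other point) is algebraically identical to $B_{i,j}\le 0$ for all $i,j$. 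Since $\mathcal{F}_{0,\infty}$-interpolability of the transformed triples is witnessed by the explicit maximum of their supporting affine functions, adding back $\frac{1}{2L}\|\cdot\|^2$ yields a $\frac1L$-strongly convex (hence coercive, closed, proper) function whose conjugate is finite on all of $\mathbb{R}^d$, lies in $\mathcal{F}_{0,L}(\mathbb{R}^d)$, and interpolates the original data.

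The step I expect to be the main obstacle is this sufficiency construction: one must check that each conjugation is applied to a closed proper function so that biconjugation recovers the right object, that the modulus and the smoothness constant transform exactly as $1/L\leftrightarrow L$, and that the max-of-affine function together with the added quadratic really reproduces the prescribed values and (sub)gradients. The accompanying algebra---expanding the transformed convexity inequality and matching it term by term with $f_j-f_i+\langle g_j,x_i-x_j\rangle+\frac{1}{2L}\|g_i-g_j\|^2$---is routine but must be carried out carefully, since a single stray sign or factor of $L$ would break the equivalence. Everything else (the necessity direction and the affine interpolation step) is standard.
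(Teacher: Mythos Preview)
The paper does not supply its own proof of Fact~\ref{fact2}; it is quoted verbatim from \cite[Corollary~1]{taylor2017smooth} and used as a black box, so there is no in-paper argument to compare against. Your proposal is correct and in fact reproduces the strategy of the cited reference: necessity via the standard descent-lemma derivation of the inequality $F(x)-F(y)-\langle\nabla F(y),x-y\rangle\ge\frac{1}{2L}\|\nabla F(x)-\nabla F(y)\|^2$, and sufficiency via Fenchel conjugation, which turns $\mathcal{F}_{0,L}$-interpolation into $\mathcal{F}_{1/L,\infty}$-interpolation and then, after subtracting $\frac{1}{2L}\|\cdot\|^2$, into $\mathcal{F}_{0,\infty}$-interpolation, witnessed by a pointwise maximum of affine functions.

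Two minor bookkeeping remarks on the sufficiency side, both of which you already flag as points to check. First, when you expand the $\mathcal{F}_{0,\infty}$ condition for the transformed triples you will find it matches $B_{j,i}\le 0$ rather than $B_{i,j}\le 0$; this is harmless since both families range over all ordered pairs. Second, the max-of-affine-plus-quadratic interpolant is closed, proper and supercoercive, so its conjugate is finite on all of $\mathbb{R}^d$ and biconjugation returns the original; this is exactly what guarantees that the final $F$ is a genuine member of $\mathcal{F}_{0,L}(\mathbb{R}^d)$ rather than an extended-real function. With those checks carried out, the argument is complete.
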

\begin{fact}\cite[Theorem 1]{taylor2017smooth} \label{fact3}
The set $\{(x_i;s_i;h_i)\}_{i\in I}$ is $\mathcal{F}_{0, \infty}$-interpolable if and only if the following set of conditions holds $\forall i,j\in I$:
 \begin{equation} \label{subeq:cov-interpo}
C_{i,j}:= h_{j}-h_{i}+\langle s_{j},x_{i}-x_{j}\rangle \leq 0.
\end{equation}
\end{fact}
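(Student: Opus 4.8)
The plan is to prove the two directions of the claimed equivalence separately. The forward (necessity) direction will be an immediate consequence of the subgradient inequality, while the reverse (sufficiency) direction will rely on exhibiting an explicit interpolating function and checking the data against it.

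For necessity, suppose there exists $h \in \mathcal{F}_{0,\infty}(\mathbb{R}^d)$ with $h_i = h(x_i)$ and $s_i \in \partial h(x_i)$ for every $i \in I$. Fixing $i,j \in I$, the defining property of the subgradient $s_j \in \partial h(x_j)$ gives $h(x_i) \geq h(x_j) + \langle s_j, x_i - x_j\rangle$, which is exactly $C_{i,j}\le 0$ after rearranging. This step requires nothing beyond writing the subgradient inequality at the point $x_i$.

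For sufficiency, assume $C_{i,j}\le 0$ for all $i,j \in I$, and take the candidate function
\[
h(x) := \max_{i \in I}\,\bigl\{\, h_i + \langle s_i, x - x_i\rangle \,\bigr\}, \qquad x \in \mathbb{R}^d.
\]
As a finite maximum of affine functions, $h$ is real-valued, convex, and closed, hence $h \in \mathcal{F}_{0,\infty}(\mathbb{R}^d)$. It then remains to verify the interpolation data at each $x_j$. Evaluating at $x = x_j$, the term indexed by $i=j$ equals $h_j$, so $h(x_j) \geq h_j$; conversely, for every $i \in I$ the inequality $C_{j,i}\le 0$ reads $h_i + \langle s_i, x_j - x_i\rangle \leq h_j$, so every term in the maximum is at most $h_j$ and thus $h(x_j) \leq h_j$; combining these gives $h(x_j) = h_j$. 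Moreover, for any $x \in \mathbb{R}^d$ the $j$-th affine term is dominated by the maximum, so $h(x) \geq h_j + \langle s_j, x - x_j\rangle = h(x_j) + \langle s_j, x - x_j\rangle$, which is precisely $s_j \in \partial h(x_j)$.

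The only point requiring care is the index bookkeeping in the sufficiency argument: to obtain $h(x_j) \le h_j$ one must invoke $C_{j,i}$, that is, condition $C_{i,j}$ with the roles of $i$ and $j$ exchanged. Beyond that, the max-of-affines construction automatically produces a closed, proper, convex function with no additional hypotheses, so no genuine obstacle arises — the argument is essentially the standard fact that a family of points and slopes is consistent with a convex function exactly when every associated supporting hyperplane lies weakly below every data point.
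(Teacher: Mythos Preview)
Your proof is correct. Note, however, that the paper does not actually prove Fact~\ref{fact3}: it is quoted verbatim as a known result from \cite[Theorem~1]{taylor2017smooth} and used as a black box in the subsequent PEP analysis. Your argument --- subgradient inequality for necessity and the max-of-affines interpolant for sufficiency --- is precisely the standard proof given in that reference, so there is no divergence in approach to report.
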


\section{Linear Convergence under the PL Inequality} \label{sect3}
In this section, we investigate the linear convergence rate of the PGM under the PL inequality.
Furthermore, we obtain the explicit linear convergence rate and the ``optimal'' step size and compare them with the existing results.

\subsection{Nonconvex Composite Case} \label{sect3.1}
We now consider the performance of the PGM under Condition \ref{con1} and assume the PL inequality \eqref{eq:sPL} holds.
In the following of this paper, we set the performance criterion as the objective function accuracy $F(x_2)-F_*$. In this setting, the PEP for the PGM can be written as follows:
\begin{equation} \label{pb:pep}
\begin{aligned}
\max \quad & F(x_2)-F_* \\
\text{s.t.} \quad & x_2\text{ is generated by Algorithm \ref{alg:the PGM} w.r.t.}F,x_1, \\
& \text{$f \in \mathcal{F}_{-L,L}(\mathbb{R}^d)$, $h \in \mathcal{F}_{0,\infty}(\mathbb{R}^d)$,}  \\
& F(x)-F_* \leq \frac{1}{2\mu} d(0,\partial F(x))^2, \forall x\in X, \\
& F(x_1) - F_* \leq \Delta, \\
& F(x) \geq F_*, \forall x \in \mathbb{R}^d, \\
& x_1\in\mathbb{R}^d,
\end{aligned}
\end{equation}
where $X \subseteq \mathbb{R}^d$, $\Delta \geq 0$ denotes the difference between the optimal function value $F_*$ and the value of $F$ at the starting point. In addition, $F$ and $x_1$ are decision variables. This is an infinite-dimensional optimization problem with an infinite number of constraints, and consequently intractable in general.
In Section \ref{sect2}, Facts \ref{fact1} and \ref{fact3} provide necessary and sufficient conditions for the interpolation of $L$-smooth functions and closed, proper and convex functions. We can discretize problem \eqref{pb:pep} as a finite-dimensional optimization problem by using the following (in)equalities and interpolation conditions \eqref{subeq:L-interpo} and \eqref{subeq:cov-interpo} in Facts \ref{fact1} and \ref{fact3}.

First, following \cite{taylor2018exact}, the iteration $x_{i+1}=\mathrm{prox}_{\gamma h}\left(x_{i}-\gamma\nabla f(x_{i})\right)$ can be rewritten using necessary and sufficient optimality conditions on the definition of the proximal operator:
\begin{equation} \label{eq:iter}
x_{i+1}=x_i-\gamma(g_i+s_{i+1})
\end{equation}
for some $s_{i+1}\in\partial h(x_{i+1})$, and $g_i = \nabla f(x_i)$. Note that $g_*+s_* = 0$.

When the function $F$ satisfies the PL inequality \eqref{eq:sPL}, we directly discretize the PL inequality \eqref{eq:sPL} and pick a subgradient $s_i \in \partial h(x_i)$.  Then we obtain
\begin{equation} \label{eq:PLdiscrete}
D_i:=f_i+h_i-F_*- \frac{1}{2\mu} \| g_i+s_i\|^2  \leq 0.
\end{equation}

We then use the PEP procedure outlined in Section \ref{sect2}, where problem \eqref{pb:pep} is discretized as a finite-dimensional optimization problem using the above (in)equalities.
We then provide an explicit convergence rate of the PGM under the PL inequality \eqref{eq:sPL}.

\begin{theorem} \label{thm:noncov_sPL}
Suppose that Condition \ref{con1} holds, $F$ satisfies the PL inequality \eqref{eq:sPL} on $X=\{x:F(x)\leq F(x_1)\}$, and $x_2$ is generated by Algorithm \ref{alg:the PGM}. Then the following holds.
\begin{enumerate}
  \item If $\gamma \in\left(0,\frac{\sqrt{3}}{L}\right]$, then
$
F(x_2) - F_* \leq \frac{(L\gamma + 1)^2}{(L\gamma + 1)^2 + L \gamma^{2} \mu + 2 \gamma \mu} (F(x_1) - F_*). $
  \item If $\gamma \in\left(\frac{\sqrt{3}}{L},\frac{2}{L}\right)$, then
$
F(x_2) - F_* \leq \frac{(L\gamma - 1)^2}{(L\gamma - 1)^2 - L \gamma^{2} \mu + 2 \gamma \mu} (F(x_1) - F_*). $
\end{enumerate}
\end{theorem}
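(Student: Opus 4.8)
The plan is to run the performance-estimation program sketched in Section~\ref{sect2} for the single PGM step $x_1\mapsto x_2$.

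\emph{Step 1 (discretization).} Take the index set $I=\{1,2,*\}$ and, for each $i\in I$, the triples $(x_i,g_i,f_i)$ with $g_i=\nabla f(x_i)$ and $(x_i,s_i,h_i)$ with $s_i\in\partial h(x_i)$ (one may fix $x_*=0$). The feasible set of \eqref{pb:pep} is then described by: the smoothness-interpolation inequalities $A_{i,j}\le 0$ of Fact~\ref{fact1} applied to $\{(x_i,g_i,f_i)\}$ (since $f\in\mathcal{F}_{-L,L}$); the convexity-interpolation inequalities $C_{i,j}\le 0$ of Fact~\ref{fact3} applied to $\{(x_i,s_i,h_i)\}$; the one-step relation \eqref{eq:iter}, $x_2=x_1-\gamma(g_1+s_2)$; the stationarity relation $g_*+s_*=0$; the discretized PL inequalities $D_1\le 0$ and $D_2\le 0$ of \eqref{eq:PLdiscrete} — both admissible because a PGM step with $\gamma\in(0,2/L)$ is monotone, $F(x_2)\le F(x_1)-\big(\frac{1}{\gamma}-\frac{L}{2}\big)\|x_1-x_2\|^2\le F(x_1)$, so $x_2\in X$ — and the feasibility inequalities $F_*-f_i-h_i\le0$ together with the normalization $f_1+h_1-F_*\le\Delta$. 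The objective is $f_2+h_2-F_*$. Passing to the Gram matrix of the relevant difference and (sub)gradient vectors makes this a convex SDP.

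\emph{Step 2 (dual certificate).} Rather than solving the SDP symbolically, I would solve it numerically over a grid of $(L,\gamma,\mu)$, read off the optimal (sparse, low-rank) dual multipliers, and fit closed-form rational expressions in $L,\gamma,\mu$. I expect the set of active constraints — hence the functional form of the multipliers — to switch exactly at $L\gamma=\sqrt{3}$; this matches the statement, since with $\rho_1=\frac{(L\gamma+1)^2}{(L\gamma+1)^2+\gamma\mu(L\gamma+2)}$ and $\rho_2=\frac{(L\gamma-1)^2}{(L\gamma-1)^2+\gamma\mu(2-L\gamma)}$ a short computation shows $\rho_1-\rho_2$ has the sign of $3-L^2\gamma^2$, so $\rho_1\ge\rho_2$ on $(0,\sqrt{3}/L]$ and $\rho_2\ge\rho_1$ on $[\sqrt{3}/L,2/L)$; the rate reported in each regime is the larger (and, from the primal SDP solution, attained) one.

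\emph{Step 3 (verification).} With the guessed multipliers $\lambda_{i,j},\tau_{i,j},\nu_1,\nu_2\ge0$ and $\rho\in\{\rho_1,\rho_2\}$ chosen for the regime at hand, I would verify the algebraic identity
\[
\bigl(f_2+h_2-F_*\bigr)-\rho\bigl(f_1+h_1-F_*\bigr)=\sum_{i,j\in I}\lambda_{i,j}A_{i,j}+\sum_{i,j\in I}\tau_{i,j}C_{i,j}+\nu_1 D_1+\nu_2 D_2-R,
\]
where $R$ is a nonnegative combination of squared norms (typically one perfect square after substituting $x_2=x_1-\gamma(g_1+s_2)$ and $s_*=-g_*$). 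Since every $A_{i,j},C_{i,j},D_1,D_2$ is $\le0$, the multipliers are $\ge0$, and $R\ge0$, the right-hand side is $\le0$, giving $F(x_2)-F_*\le\rho\,(F(x_1)-F_*)$ for the corresponding range of $\gamma$.

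\emph{Main obstacle.} Step~1 and the final sign check in Step~3 are routine; the crux is Step~2 — guessing the closed-form multipliers and, above all, the residual $R$ that makes the identity exact — together with the bookkeeping of the case split at $\sqrt{3}/L$, where the supporting constraints change. Eliminating $x_2$ and $s_*$ before matching coefficients keeps the algebra manageable, and one must not overlook checking $x_2\in X$ so that $D_2$, if used, is legitimately available.
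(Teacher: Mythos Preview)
Your plan is exactly the paper's approach: the paper forms the same one-step PEP and in each regime uses only the four constraints $A_{1,2}$, $A_{2,1}$, $C_{1,2}$, and $D_2$ (so $\nu_1=0$ and no constraint involving $x_*$ is needed), verifying an identity whose residual $R$ is a single perfect square in $L\gamma(g_1+s_2)\pm(g_1-g_2)$ with coefficient proportional to $\pm(L^2\gamma^2-3)$, which produces the case split at $\sqrt{3}/L$ just as you predict. All that remains is the mechanical Step~2/3 execution you outline; the closed-form multipliers turn out to be rational in $L,\gamma,\mu$ with common denominator $(L\gamma\pm1)^2+(\pm L\gamma^2+2\gamma)\mu$ in the respective regimes.
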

\begin{proof}1. First consider the case of $\gamma \in\left(0,\frac{\sqrt{3}}{L}\right]$.
Let
$\alpha_1 = \frac{L^{2} \gamma^{2} + 3 L \gamma + 2}{(L\gamma + 1)^2 + L \gamma^{2} \mu + 2 \gamma \mu}, \alpha_2 = \frac{L \gamma + 1}{(L\gamma + 1)^2 + L \gamma^{2} \mu + 2 \gamma \mu},$
$\beta_1  =  \frac{(L\gamma + 1)^2}{(L\gamma + 1)^2 + L \gamma^{2} \mu + 2 \gamma \mu}$, and $\lambda_1 = \frac{L \gamma^{2} \mu + 2 \gamma \mu}{(L\gamma + 1)^2 + L \gamma^{2} \mu + 2 \gamma \mu}.$
It is obvious that $\alpha_1, \alpha_2, \beta_1$, and $\lambda_1$ are nonnegative. We obtain that
\begin{equation}
\begin{aligned}
F(x_2) & - F_* - (\alpha_1-\alpha_2) (F(x_1) - F_*)-\alpha_1 A_{1,2} -\alpha_2 A_{2,1} - \beta_1 C_{1,2} -\lambda_1 D_{2} \\
& = \frac{L^2\gamma^2-3}{4L\left(L\gamma^2\mu + 2\gamma\mu + \left(L\gamma+1\right)^2\right)} \|Lg_1\gamma+L\gamma s_2+g_1-g_2\|^2\leq 0,
\end{aligned} \nonumber
\end{equation}
where $A_{1,2}$ and $A_{2,1}$ are defined in \eqref{subeq:L-interpo}, $C_{1,2}$ is defined in \eqref{subeq:cov-interpo}, 
$D_2$ is defined in \eqref{eq:PLdiscrete},
and the last inequality holds since $\gamma \in (0,\frac{\sqrt{3}}{L}]$.
Because the terms after the nonnegative parameters $\alpha_1,\alpha_2,\beta_1,\lambda_1$ are nonpositive, we have
$
F(x_2) - F_* \leq (\alpha_1-\alpha_2)(F(x_1) - F_*).
$
We then obtain the desired result by substituting the values of $\alpha_1$ and $\alpha_2$.

2. For the case of $\gamma \in\left(\frac{\sqrt{3}}{L},\frac{2}{L}\right)$, let
$\alpha_1 = \frac{L \gamma - 1}{(L\gamma - 1)^2 - L \gamma^{2} \mu + 2 \gamma \mu}, \alpha_2 = \frac{- L^{2} \gamma^{2} + 3 L \gamma - 2}{(L\gamma - 1)^2 - L \gamma^{2} \mu + 2 \gamma \mu},$
$\beta_1 = \frac{(L\gamma - 1)^2}{(L\gamma - 1)^2 - L \gamma^{2} \mu + 2 \gamma \mu}, \lambda_1 = \frac{- L \gamma^{2} \mu + 2 \gamma \mu}{(L\gamma - 1)^2 - L \gamma^{2} \mu + 2 \gamma \mu}.$
It is obvious that $\alpha_1, \alpha_2, \beta_1$, and $\lambda_1$ are nonnegative. By multiplying the inequalities by their respective factors and then summing them, we obtain that
\begin{equation}
\begin{aligned}
F(x_2) & - F_* - (\alpha_1-\alpha_2) (F(x_1) - F_*)-\alpha_1 A_{1,2}-\alpha_2 A_{2,1}-\beta_1 C_{1,2}-\lambda_1 D_2 \\
& = \frac{-L^2\gamma^2+3}{4L\left(-L\gamma^2\mu+2\gamma\mu+\left(L\gamma-1\right)^2\right)}\|L\gamma g_1+L\gamma s_2-g_1+g_2\|^2 \leq 0,
\end{aligned} \nonumber
\end{equation}
where $A_{1,2}$ and $A_{2,1}$ are defined in \eqref{subeq:L-interpo}, $C_{1,2}$ is defined in \eqref{subeq:cov-interpo}, 
$D_2$ is defined in \eqref{eq:PLdiscrete}, and the last inequality holds since $\gamma \in (\frac{\sqrt{3}}{L},\frac{2}{L})$. The remainder of the proof follows a similar approach to the first case.
\end{proof}

We remark that the parameters $\alpha_1, \alpha_2, \beta_1$, and $\lambda_1$ are obtained from the SDP relaxation of the PEP problem, which is derived based on investigating the dual optimal solutions from numerous numerical experiments. Note that the explicit dual feasible solution and convergence rate in Theorem \ref{thm:noncov_sPL} match those of the SDP relaxation from numerical experiments.

\begin{remark}
Existing literature has studied the linear convergence properties of gradient methods for nonconvex and nonsmooth functions under the PL inequality \eqref{eq:sPL}, e.g, Li and Pong \cite{li2018calculus} showed that if $F$ satisfies the PL inequality \eqref{eq:sPL}, the iterative sequence $\{x_k\}$ converges locally linearly to a stationary point of $F$. However, to the best of our knowledge, there is no \emph{explicit} rate of linear convergence in existing literature. In Theorem \ref{thm:noncov_sPL}, we provide the first \emph{explicit} linear convergence rate of the PGM on objective function values for convex composition minimization.
\end{remark}

The following proposition provides the ``optimal'' step size in the sense of minimizing the worst-case convergence rate in Theorem \ref{thm:noncov_sPL}.
\begin{proposition}\label{prop:1}
Suppose that $F$ satisfies the same conditions in Theorem \ref{thm:noncov_sPL}. Then the ``optimal" step size for the PGM with the worst-case convergence rate in Theorem \ref{thm:noncov_sPL} is given by $\frac{\sqrt{3}}{L}$ in $\gamma \in (0,\frac{2}{L})$.
\end{proposition}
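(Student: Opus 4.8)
The plan is to regard the worst-case guarantee of Theorem~\ref{thm:noncov_sPL} as a single function $\rho(\gamma)$ of the step size, piecewise-defined on $(0,\sqrt3/L]$ and on $(\sqrt3/L,2/L)$, and to minimize it over $(0,2/L)$ by analyzing each piece separately and then matching at the junction $\gamma=\sqrt3/L$. The key reduction is to write each branch in the form $\rho(\gamma)=1/(1+\phi(\gamma))$ with $\phi\ge0$ on the relevant interval, so that minimizing $\rho$ is the same as maximizing $\phi$.

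First, on $(0,\sqrt3/L]$ one has $\phi_1(\gamma)=\dfrac{(L\gamma^2+2\gamma)\mu}{(L\gamma+1)^2}$, which upon the substitution $t=L\gamma\in(0,\sqrt3]$ and the identity $t^2+2t=(t+1)^2-1$ collapses to $\phi_1=\dfrac{\mu}{L}\bigl(1-\tfrac{1}{(t+1)^2}\bigr)$; this is strictly increasing in $t$, so on this branch $\rho$ is minimized at $t=\sqrt3$, i.e. $\gamma=\sqrt3/L$. Next, on $(\sqrt3/L,2/L)$ one has $\phi_2(\gamma)=\dfrac{\mu\gamma(2-L\gamma)}{(L\gamma-1)^2}$, where the numerator is positive since $\gamma<2/L$; with $t=L\gamma\in(\sqrt3,2)$ and $2t-t^2=1-(t-1)^2$ this becomes $\phi_2=\dfrac{\mu}{L}\bigl(\tfrac{1}{(t-1)^2}-1\bigr)$, which is strictly decreasing in $t$ on $(\sqrt3,2)$ because $t>1$ there. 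Hence its supremum over this half-open interval is approached as $t\downarrow\sqrt3$ and is not attained in the interior.

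Finally I would verify that the two branches agree at the junction, $\phi_1(\sqrt3/L)=\phi_2(\sqrt3/L)=\sqrt3\mu/(2L)$ (equivalently the two rate expressions in Theorem~\ref{thm:noncov_sPL} coincide at $\gamma=\sqrt3/L$, with common value $\rho=2L/(2L+\sqrt3\mu)$), so that $\rho$ is decreasing on $(0,\sqrt3/L]$ and increasing on $[\sqrt3/L,2/L)$. Combining these two monotonicity statements shows that the unique minimizer of the worst-case rate over $(0,2/L)$ is $\gamma=\sqrt3/L$. I do not expect a genuine obstacle here: the only points needing care are checking the signs of the two denominators so that the $\rho=1/(1+\phi)$ rewriting is legitimate and $\phi_i\ge0$, and spotting the two algebraic identities $t^2+2t=(t+1)^2-1$ and $2t-t^2=1-(t-1)^2$, which turn what otherwise look like awkward rational functions into manifestly monotone ones.
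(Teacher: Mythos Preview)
Your proposal is correct and follows essentially the same route as the paper: show the rate is decreasing on $(0,\sqrt3/L]$, increasing on $(\sqrt3/L,2/L)$, and continuous at the junction. The paper simply asserts the sign of $\tau'(\gamma)$ on each piece, whereas your rewriting $\rho=1/(1+\phi)$ together with the identities $t^2+2t=(t+1)^2-1$ and $2t-t^2=1-(t-1)^2$ is a cleaner way to see the same monotonicity without differentiating a rational function.
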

\begin{proof}
Let
$
\tau(\gamma) =\begin{cases}\frac{(L\gamma + 1)^2}{(L\gamma + 1)^2 + L \gamma^{2} \mu + 2 \gamma \mu}&\gamma \in\left(0,\frac{\sqrt{3}}{L}\right]\\
\frac{(L\gamma - 1)^2}{(L\gamma - 1)^2 - L \gamma^{2} \mu + 2 \gamma \mu}&\gamma \in\left(\frac{\sqrt{3}}{L},\frac{2}{L}\right).\end{cases} \nonumber
$
Note that $\tau'(\gamma) < 0$ with $\gamma \in\left(0,\frac{\sqrt{3}}{L}\right)$, and that $\tau'(\gamma) > 0$ with $\gamma \in\left(\frac{\sqrt{3}}{L},\frac{2}{L}\right)$. Note also that $\tau(\gamma)$ is continuous at $\frac{\sqrt3}{L}$.
Hence, $\tau(\gamma)$ is decreasing at $\left(0,\frac{\sqrt{3}}{L}\right]$ and increasing at $\left(\frac{\sqrt{3}}{L},\frac{2}{L}\right)$. Therefore, the minimum value of $\tau(\gamma)$ occurs at $\frac{\sqrt{3}}{L}$.
\end{proof}

\subsection{Convex Composite Case}
We consider the performance of the PGM under Condition \ref{con2} and assume the PL inequality \eqref{eq:sPL} holds.
We still need the iteration format \eqref{eq:iter}, the interpolation condition for closed, proper and convex functions \eqref{subeq:cov-interpo}, and the PL inequality \eqref{eq:PLdiscrete} mentioned in Section \ref{sect3.1}. However, we replace \eqref{subeq:L-interpo} with the interpolation condition for $L$-smooth convex functions \eqref{eq:Lconv}. 

Following the PEP procedure in Section \ref{sect2}, we provide an explicit linear convergence rate for the PGM under the PL inequality \eqref{eq:sPL}.
\begin{theorem} \label{thm:cov_PL}
Suppose that Condition \ref{con2} holds, $F$ satisfies the PL inequality \eqref{eq:sPL} on $X=\{x:F(x)\leq F(x_1)\}$, and $x_2$ is generated by Algorithm \ref{alg:the PGM}. Then the following holds.
\begin{enumerate}
\item If $\gamma\in(0,\frac{3}{2L}]$, then
$F(x_2) - F_* \leq \frac{1}{2 \gamma \mu + 1}
 (F(x_1) - F_*).$
\item  If $\gamma\in(\frac{3}{2L},\frac{2}{L})$, then
$F(x_2) - F_* \leq \frac{(L\gamma - 1)^2}{(L\gamma - 1)^2 - L \gamma^{2} \mu + 2 \gamma \mu}
 (F(x_1) - F_*). $
 \end{enumerate}
\end{theorem}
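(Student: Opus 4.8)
The plan is to reuse, essentially verbatim, the dual-certificate template from the proof of Theorem~\ref{thm:noncov_sPL}, changing only the inequalities that encode the regularity of $f$: since now $f\in\mathcal F_{0,L}(\mathbb R^d)$, I would use the convex $L$-smooth interpolation inequalities $B_{1,2},B_{2,1}\le 0$ from \eqref{eq:Lconv} in Fact~\ref{fact2} in place of $A_{1,2},A_{2,1}$. The discretized PEP over the index set $\{1,2,*\}$ then comprises: the iteration identity \eqref{eq:iter}, that is, $x_2=x_1-\gamma(g_1+s_2)$ with $g_1=\nabla f(x_1)$, $s_2\in\partial h(x_2)$ and $g_*+s_*=0$; the inequalities $B_{1,2},B_{2,1}\le 0$; the convexity inequality $C_{1,2}\le 0$ from \eqref{subeq:cov-interpo}; and the discretized PL inequality $D_2\le 0$ from \eqref{eq:PLdiscrete}. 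The target is $F(x_2)-F_*=f_2+h_2-f_*-h_*$, and for each step-size regime I would exhibit nonnegative multipliers together with a nonpositive residual whose sign isolates the threshold $\gamma=\tfrac{3}{2L}$.

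For case~1 ($\gamma\in(0,\tfrac{3}{2L}]$), guided by the optimal dual solution of the SDP relaxation exactly as in Theorem~\ref{thm:noncov_sPL}, I would look for nonnegative scalars $\alpha_1,\alpha_2,\beta_1,\lambda_1$ with
\[
F(x_2)-F_*-(\alpha_1-\alpha_2)\big(F(x_1)-F_*\big)-\alpha_1B_{1,2}-\alpha_2B_{2,1}-\beta_1C_{1,2}-\lambda_1D_2=c\,\|v\|^2,
\]
where $v$ is an explicit linear combination of $g_1,g_2,s_2$ (after eliminating $x_2$ via \eqref{eq:iter}) and $c$ is a rational function of $L,\gamma,\mu$ with positive denominator and numerator proportional to $2L\gamma-3$. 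Matching the coefficients of $f_1,f_2,h_1,h_2,F_*$ on the two sides forces $\beta_1=\alpha_1-\alpha_2=\tfrac{1}{2\gamma\mu+1}$ and $\lambda_1=\tfrac{2\gamma\mu}{2\gamma\mu+1}$ — which explains why the Lipschitz constant drops out of the rate and survives only inside $c$ — while the two remaining degrees of freedom in $(\alpha_1,\alpha_2)$ are spent making the residual a scalar multiple of a single squared norm. Since $\gamma\le\tfrac{3}{2L}$ gives $c\le 0$, and every $B,C,D$ term is $\le 0$ with every multiplier $\ge 0$, the identity immediately yields $F(x_2)-F_*\le\tfrac{1}{2\gamma\mu+1}(F(x_1)-F_*)$. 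For case~2 ($\gamma\in(\tfrac{3}{2L},\tfrac{2}{L})$) I would repeat the procedure with a second set of nonnegative multipliers; the residual then carries the sign-flipped factor $3-2L\gamma<0$, so the certificate is valid precisely on this complementary interval, and the resulting rate $\tfrac{(L\gamma-1)^2}{(L\gamma-1)^2-L\gamma^2\mu+2\gamma\mu}$ agrees with the second branch of Theorem~\ref{thm:noncov_sPL} on $[\tfrac{\sqrt3}{L},\tfrac{2}{L})$, while being strictly smaller than the first branch of Theorem~\ref{thm:noncov_sPL} on $(\tfrac{3}{2L},\tfrac{\sqrt3}{L})$ — the gain that convexity of $f$ provides there.

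The main obstacle is the one already present in Theorem~\ref{thm:noncov_sPL}: guessing the analytic form of the four multipliers and of the residual vector $v$ from the numerical SDP data, and then checking by a lengthy but routine algebraic identity that the left-hand side collapses exactly to $c\,\|v\|^2$ with the stated $c$. A structural remark worth making is why the threshold here is the linear value $\tfrac{3}{2L}$ rather than $\tfrac{\sqrt3}{L}$: the convex inequality $B_{i,j}$ contributes only the term $\tfrac{1}{2L}\|g_i-g_j\|^2$, whereas $A_{i,j}$ also contributes $-\tfrac{L}{4}\|x_i-x_j\|^2$, which upon substituting \eqref{eq:iter} injects a $\gamma^2L$ term and makes the critical quantity quadratic in $\gamma$; with $B_{i,j}$ it stays linear. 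A secondary check is that all four multipliers remain nonnegative on each interval — in particular that $\pm L\gamma^2\mu+2\gamma\mu$ keeps the sign that also keeps the stated denominators positive — together with continuity of the two-piece rate at $\gamma=\tfrac{3}{2L}$, as in Proposition~\ref{prop:1}. Once the identity is in hand, both inequalities follow by sign-checking alone, with no limiting or telescoping argument needed since $x_2$ results from a single proximal-gradient step.
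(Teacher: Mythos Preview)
Your proposal is correct and follows essentially the same approach as the paper: both use a dual certificate built from nonnegative multipliers on $B_{1,2},B_{2,1},C_{1,2},D_2$, with the residual collapsing to a scalar multiple of a single squared norm whose coefficient changes sign exactly at $\gamma=\tfrac{3}{2L}$ (indeed the paper's residuals are $\tfrac{2L\gamma-3}{2(2\gamma\mu+1)}\|g_1-g_2\|^2$ and $\tfrac{3-2L\gamma}{2L((L\gamma-1)^2-L\gamma^2\mu+2\gamma\mu)}\|L\gamma g_1+L\gamma s_2-g_1+g_2\|^2$, matching your predictions). One tiny quibble: coefficient matching on $f_i,h_i,F_*$ only forces the relations $\beta_1=\alpha_1-\alpha_2$ and $\lambda_1=1-\beta_1$, leaving a single (not two) residual degree of freedom and the value of $\beta_1$ itself to be pinned down by the quadratic part---but this is cosmetic and your plan handles it.
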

\begin{proof}
1. First consider the case of $\gamma \in(0,\frac{3}{2L}]$. Let
$\alpha_1 = \frac2{2\gamma\mu+1}, \alpha_2 = \beta_1 = \frac1{2\gamma\mu+1}, \lambda_1 = \frac{2\gamma\mu}{2\gamma\mu+1}.$
It is obvious that $\alpha_1, \alpha_2, \beta_1$, and $\lambda_1$ are nonnegative. We obtain that
\begin{equation}
\begin{aligned}
F(x_2) & - F_* - \beta_1 (F(x_1) - F_*)-\alpha_1 B_{1,2} -\alpha_2 B_{2,1} - \beta_1 C_{1,2} -\lambda_1 D_2 \\
& = \frac{2L\gamma-3}{2(2\gamma\mu+1)}\|g_1-g_2\|^2 \leq 0,
\end{aligned} \nonumber
\end{equation}
where $B_{1,2}$ and $B_{2,1}$ are defined in \eqref{eq:Lconv}, $C_{1,2}$ is defined in \eqref{subeq:cov-interpo}, 
$D_2$ is defined in \eqref{eq:PLdiscrete}, and the last inequality holds since $\gamma \in(0,\frac{3}{2L}]$. Because the terms after the nonnegative parameters $\alpha_1,\alpha_2,\beta_1$, and $\lambda_1$ are nonpositive, we have
\begin{equation}
F(x_2) - F_* \leq \beta_1 (F(x_1) - F_*) = \frac{1}{2 \gamma \mu + 1} (F(x_1) - F_*). \nonumber
\end{equation}

2. For the case of $\gamma \in(\frac{3}{2L},\frac{2}{L})$, let
$\alpha_1 = \frac{L\gamma-1}{(L\gamma-1)^2-L\gamma^2\mu+2\gamma\mu}, \alpha_2 = \frac{-L^{2}\gamma^{2}+3L\gamma-2}{(L\gamma-1)^2-L\gamma^2\mu+2\gamma\mu},$
$\beta_1 = \frac{(L\gamma - 1)^2}{(L\gamma-1)^2-L\gamma^2\mu+2\gamma\mu}, \lambda_1 = \frac{-L\gamma^2\mu+2\gamma\mu}{(L\gamma-1)^2-L\gamma^2\mu+2\gamma\mu}.$
Similar to the previous case, we obtain that
\begin{equation}
\begin{aligned}
F(x_2) & - F_* - \beta_1 (F(x_1) - F_*)-\alpha_1 B_{1,2} -\alpha_2 B_{2,1}-\beta_1 C_{1,2} -\lambda_1 D_2 \\
& =\frac{-2L\gamma+3}{2L\left(-L\gamma^2\mu+2\gamma\mu + \left(L\gamma-1\right)^2\right)}\|Lg_1\gamma+L\gamma s_2-g_1+g_2\|^2 \leq 0,
\end{aligned} \nonumber
\end{equation}
where $B_{1,2}$ and $B_{2,1}$ are defined in \eqref{eq:Lconv}, $C_{1,2}$ is defined in \eqref{subeq:cov-interpo}, 
$D_2$ is defined in \eqref{eq:PLdiscrete}, and the last inequality holds since $\gamma \in(\frac{3}{2L},\frac{2}{L})$. The remainder of the proof follows a similar approach to the previous case.
\end{proof}

By minimizing the convergence rate in Theorem \ref{thm:cov_PL}, the next proposition gives the ``optimal'' step size for the bound. The proof is similar to that of Proposition \ref{prop:1} and thus omitted.
\begin{proposition} \label{prop:cov_optstepPL}
Suppose that $F$ satisfies the same conditions in Theorem \ref{thm:cov_PL}. Then the ``optimal'' step size for the PGM with the worst-case convergence rate in Theorem \ref{thm:cov_PL} is given by $\frac{3}{2L}$ in $\gamma \in (0,\frac{2}{L})$.
\end{proposition}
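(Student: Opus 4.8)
The plan is to follow exactly the template of the proof of Proposition~\ref{prop:1}. Write the worst-case rate of Theorem~\ref{thm:cov_PL} as the single piecewise function
\[
\tau(\gamma)=\begin{cases}\dfrac{1}{2\gamma\mu+1}, & \gamma\in\left(0,\tfrac{3}{2L}\right],\\[1mm]
\dfrac{(L\gamma-1)^2}{(L\gamma-1)^2-L\gamma^2\mu+2\gamma\mu}, & \gamma\in\left(\tfrac{3}{2L},\tfrac{2}{L}\right),\end{cases}
\]
and establish three facts: $\tau$ is strictly decreasing on the first subinterval, strictly increasing on the second, and continuous at the junction $\gamma=\frac{3}{2L}$. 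Together these force $\min_{\gamma\in(0,2/L)}\tau(\gamma)=\tau(\frac{3}{2L})$. The first fact is immediate: since $\mu>0$, the quantity $2\gamma\mu+1$ is positive and strictly increasing in $\gamma$, so $\tau(\gamma)=1/(2\gamma\mu+1)$ strictly decreases on $\left(0,\tfrac{3}{2L}\right]$.

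The second subinterval is where the actual work lies, and it is the step I expect to be the main obstacle. I would rewrite the second branch as
\[
\tau(\gamma)=\frac{1}{1+\mu\gamma\,\dfrac{2-L\gamma}{(L\gamma-1)^2}},
\]
so it suffices to show that $g(\gamma):=\mu\gamma\,\frac{2-L\gamma}{(L\gamma-1)^2}$ is strictly decreasing on $\left(\tfrac{3}{2L},\tfrac{2}{L}\right)$ (it is also positive there, so $\tau$ remains well defined and the map $t\mapsto 1/(1+t)$ reverses monotonicity). Substituting $u=L\gamma\in(3/2,2)$ gives $g=\frac{\mu}{L}\cdot\frac{u(2-u)}{(u-1)^2}$, and the key algebraic identity $u(2-u)=1-(u-1)^2$ collapses this to $g=\frac{\mu}{L}\bigl((u-1)^{-2}-1\bigr)$, which is manifestly strictly decreasing for $u>1$. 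Hence $\tau$ is strictly increasing on the second subinterval. (Equivalently, one may differentiate $\tau$ directly; the sign of $\tau'$ on that interval is controlled by the factor $3-2L\gamma<0$, mirroring the right-hand side of the identity that appears in the proof of Theorem~\ref{thm:cov_PL}.)

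Finally, evaluating both branches at $\gamma=\frac{3}{2L}$ yields the common value $\frac{L}{L+3\mu}$ (indeed $g(\frac{3}{2L})=\frac{3\mu}{L}$), so $\tau$ is continuous at the junction. Combining: $\tau$ decreases up to $\frac{3}{2L}$ and increases thereafter, hence attains its minimum on $(0,\frac{2}{L})$ at $\gamma=\frac{3}{2L}$, which is the claim. Apart from recognizing the simplification $u(2-u)=1-(u-1)^2$ that makes monotonicity on the second piece transparent, every step is routine arithmetic, which is presumably why the paper omits the details.
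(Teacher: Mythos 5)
Your proof is correct and follows essentially the same route the paper indicates (and omits): show the rate is decreasing on $\left(0,\tfrac{3}{2L}\right]$, increasing on $\left(\tfrac{3}{2L},\tfrac{2}{L}\right)$, and continuous at the junction, exactly as in Proposition~\ref{prop:1}. The substitution $u=L\gamma$ with the identity $u(2-u)=1-(u-1)^2$ is just a cleaner way to get the monotonicity of the second branch than differentiating directly; the values $\tau\bigl(\tfrac{3}{2L}\bigr)=\tfrac{L}{L+3\mu}$ and all other computations check out.
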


\textbf{Comparison.} Now let us compare our convergence results with existing literature.
Recently, Garrigos et al. \cite{garrigos2023convergence}  show that the convergence of the PGM (in our notation) is
\begin{equation} \label{eq:g2023rate1}
F(x_{2})-F^*\leq\frac{1}{1+\gamma\mu(2-\gamma L)}(F(x_1)-F^*).
\end{equation}
Next we show that our rate in Theorem \ref{thm:cov_PL} is better than \eqref{eq:g2023rate1} by considering two intervals.
When $\gamma \in(0,\frac{3}{2L}]$, due to $2-\gamma L \in [\frac{1}{2}, 2)$, we have
\begin{equation}
\frac{1}{1+2\gamma\mu} \leq \frac{1}{1+\gamma\mu(2-\gamma L)} \nonumber
\end{equation}
and thus our bound in Theorem \ref{thm:cov_PL} is tighter.
Next let $\gamma \in(\frac{3}{2L},\frac{2}{L})$. Since $(L\gamma - 1)^2 \in (\frac{1}{4},1)$, we have
\begin{equation}
\gamma\mu(2-\gamma L) = 2\gamma\mu- L\gamma^{2}\mu \leq \frac{2\gamma\mu- L\gamma^{2}\mu}{(L\gamma - 1)^2}, \nonumber
\end{equation}
which further implies
\begin{equation}
\frac{(L\gamma - 1)^2}{(L\gamma - 1)^2 - L \gamma^{2} \mu + 2 \gamma \mu} = \frac{1}{1+\frac{2\gamma\mu- L\gamma^{2}\mu}{(L\gamma - 1)^2}} \leq \frac{1}{1+\gamma\mu(2-\gamma L)}. \nonumber
\end{equation}
Therefore, we provide a tighter convergence rate for $\gamma \in (0, \frac{2}{L})$ than \eqref{eq:g2023rate1}.
In addition, the step size $\frac{1}{L}$ corresponds to the ``optimal'' step size for \eqref{eq:g2023rate1}. When the step size is $\frac{1}{L}$, the contraction rate of \eqref{eq:g2023rate1} is
$\frac{L}{L+\mu}.$
Proposition \ref{prop:cov_optstepPL} shows that $\frac{3}{2L}$ give a rate
$\frac{L}{L+3\mu}$,
which is strictly better than \eqref{eq:g2023rate1}.
An illustration of the convergence rate computed by PEP with the bound \eqref{eq:g2023rate1} is shown in Fig. \ref{fig:1}.
\begin{figure}[ht]
    \centering
    \begin{minipage}{0.45\textwidth}
        \centering
        \includegraphics[width=\linewidth]{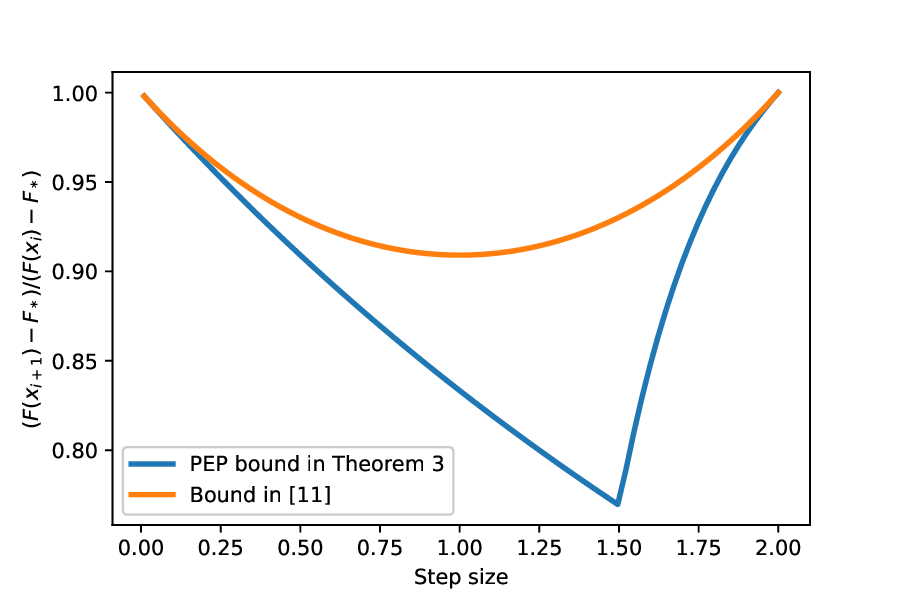}
\caption{Comparison between convergence rates computed by PEP and bound given by \eqref{eq:g2023rate1} in \cite{garrigos2023convergence} ($L=1$, $\mu=0.1$).}
\label{fig:1}
    \end{minipage}
 \hspace{0.05\textwidth}
    \begin{minipage}{0.45\textwidth}
        \centering
        \includegraphics[width=\linewidth]{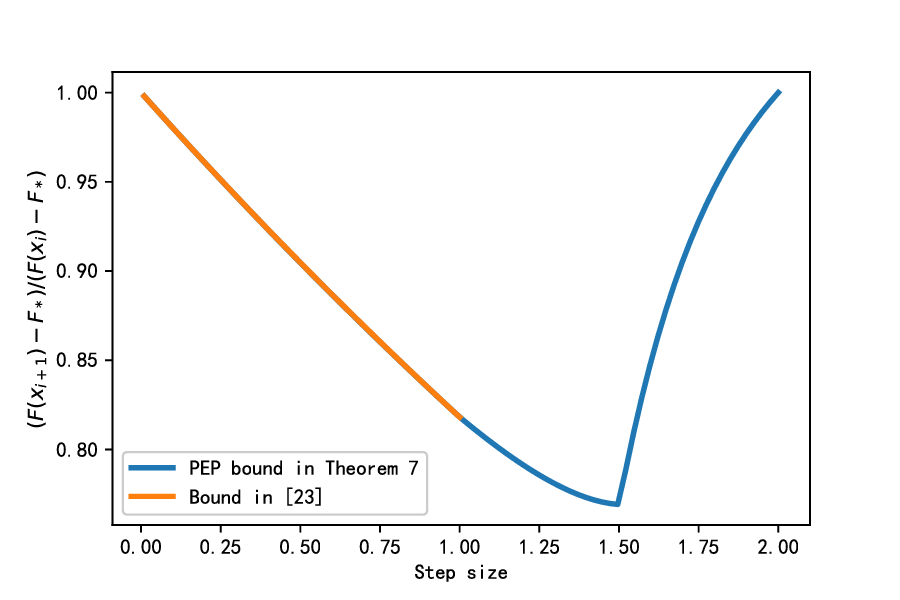}
\caption{Comparison between convergence rates computed by PEP and bound given by \eqref{eq:zhang9} in \cite{zhang2019new} ($L=1$, $\mu=0.1$).}
\label{fig:2}
    \end{minipage}
\end{figure}

\section{Linear Convergence under the RPL Inequality} \label{sect4}
In this section, we study the linear convergence of the PGM under the RPL inequality \eqref{eq:RPL}. We use the performance estimation to obtain the explicit linear convergence rate and the ``optimal'' step size.

\subsection{Nonconvex Composite Case}
First, we consider the performance of the PGM under Condition \ref{con1} and assume the RPL inequality \eqref{eq:RPL} holds.
In addition to the iteration format \eqref{eq:iter}, the interpolation condition for closed, proper and convex functions \eqref{subeq:cov-interpo}, and the interpolation condition for $L$-smooth functions \eqref{eq:Lconv} mentioned in Section \ref{sect2}, we also need to find a discretization of \eqref{eq:RPL} in a Gram-representable form.  If $f$ satisfies the RPL inequality \eqref{eq:sPL}, we can discretize it as
\begin{equation} \label{eq:RPLdiscrete}
E_i := f_i+h_i-F_* - \frac{1}{2\mu} \| \mathcal{G}_i\|^2 \leq 0,
\end{equation}
where $\mathcal{G}_i=\mathcal{G}_{\gamma}(x_i)$.
Furthermore, plug $x_{i+1}=\mathrm{prox}_{\gamma h}(x_i-\gamma\nabla f(x_i))$ and iteration $x_{i+1} = x_i-\gamma(g_i+s_{i+1})$ into the proximal map-based residual $\mathcal{G}_{\gamma}(x_i):=\frac{1}{\gamma}\left(x_i-\mathrm{prox}_{\gamma h}(x_i-\gamma\nabla f(x_i))\right)$,
we have
\begin{equation} \label{eq:GMdiscrete}
\mathcal{G}_{\gamma}(x_i)=\frac{1}{\gamma}(x_i-x_{i+1})=g_i+s_{i+1}.
\end{equation}
Therefore, the RPL inequality \eqref{eq:RPL} implies
\begin{equation}\label{eq:Eprimal}
E'_i := (f_i + h_i)-(f_* + h_*) - \frac{1}{2\mu}(g_i+s_{i+1})^2 \leq 0. 
\end{equation}

Following the PEP procedure in Section \ref{sect2}, we provide an explicit linear convergence rate for the PGM under the RPL inequality \eqref{eq:RPL}.
\begin{theorem} \label{thm:noncov_RPL}
Suppose that Condition \ref{con1} holds, $F$ satisfies the RPL inequality \eqref{eq:RPL} on $X=\{x:F(x)\leq F(x_1)\}$, and $x_2$ is generated by Algorithm \ref{alg:the PGM}. If $\gamma \in\left(0,\frac{2}{L}\right)$, then we have
\begin{equation}
F(x_2) - F_* \leq \frac{L+\mu\left(L\gamma-1\right)^2-\mu}L (F(x_1) - F_*). \nonumber
\end{equation}
\end{theorem}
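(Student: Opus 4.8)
The plan is to mimic the structure of the proofs of Theorems~\ref{thm:noncov_sPL} and~\ref{thm:cov_PL}: exhibit an explicit nonnegative linear combination of the discretized constraints that, together with the objective $F(x_2)-F_*$, collapses to a single nonpositive squared term, thereby certifying the claimed contraction factor. Concretely, I would work with the oracle triples at the two relevant points $x_1$ and $x_2$ (and the optimum $x_*$), using for $f$ the $L$-smoothness interpolation inequalities $A_{i,j}\le 0$ from Fact~\ref{fact1} (this is the nonconvex case, so we use \eqref{subeq:L-interpo} rather than \eqref{eq:Lconv}), for $h$ the convex interpolation inequalities $C_{i,j}\le 0$ from Fact~\ref{fact3}, the iteration identity \eqref{eq:iter}, and the discretized RPL inequality $E'_2\le 0$ from \eqref{eq:Eprimal}, where by \eqref{eq:GMdiscrete} the residual at $x_1$ is $\mathcal{G}_\gamma(x_1)=g_1+s_2$. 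The target rate is $\rho:=\frac{L+\mu(L\gamma-1)^2-\mu}{L}=1-\frac{\mu\gamma(2-L\gamma)}{L}$, so I would seek multipliers $\alpha_1,\alpha_2\ge 0$ on $A_{1,2},A_{2,1}$, $\beta_1\ge 0$ on $C_{1,2}$, and $\lambda_1\ge 0$ on $E'_2$ (possibly also a multiplier on $C_{2,1}$ or on the $A$-inequalities involving $x_*$), such that
\[
F(x_2)-F_*-\rho\,(F(x_1)-F_*)-\alpha_1 A_{1,2}-\alpha_2 A_{2,1}-\beta_1 C_{1,2}-\lambda_1 E'_2 = -\,c\,\|v\|^2
\]
for some $c\ge 0$ and some explicit vector $v$ that is a linear combination of $g_1,g_2,s_2$ (by analogy with the earlier proofs, something like $v=L\gamma g_1+L\gamma s_2-g_1+g_2$). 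Since $h_*+f_* = F_*$ and $h_i+f_i=F(x_i)$, and $g_*+s_*=0$, all the function-value bookkeeping telescopes exactly as in the previous theorems.

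The concrete steps, in order, are: (i) write out $A_{1,2},A_{2,1},C_{1,2},E'_2$ explicitly in terms of $x_1,x_2,g_1,g_2,s_2$, substituting $x_2-x_1=-\gamma(g_1+s_2)$ from \eqref{eq:iter} to eliminate the iterates; (ii) form the candidate combination with symbolic multipliers and demand that all cross terms in $\langle g_i,g_j\rangle$, $\langle g_i,s_2\rangle$, $\|g_i\|^2$, $\|s_2\|^2$ organize into a single perfect square plus a multiple of $F(x_1)-F_*$ and $F(x_2)-F_*$; (iii) solve the resulting small linear system for $\alpha_1,\alpha_2,\beta_1,\lambda_1$ and the square coefficient $c$, reading off that the coefficient of $F(x_1)-F_*$ equals exactly $\rho$; (iv) verify $\alpha_1,\alpha_2,\beta_1,\lambda_1\ge 0$ and $c\ge 0$ for all $\gamma\in(0,2/L)$ — here the constraint $\gamma<2/L$ should be precisely what makes $2-L\gamma>0$, hence $\rho<1$ and the residual-type multiplier $\lambda_1$ nonnegative, while $L\gamma<2$ also controls the sign of $c$; (v) conclude that the left-hand side is $\le 0$, whence $F(x_2)-F_*\le \rho\,(F(x_1)-F_*)$. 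As in the remark following Theorem~\ref{thm:noncov_sPL}, the multipliers themselves are to be obtained by first solving the SDP relaxation of the PEP numerically and then guessing and verifying the closed form.

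The main obstacle I anticipate is twofold. First, unlike the PL case \eqref{eq:PLdiscrete} where the discretized inequality involves $\|g_i+s_i\|^2$ at the \emph{same} index, the RPL discretization \eqref{eq:Eprimal} couples index $i$ with $s_{i+1}$, i.e.\ $E'_2$ involves $g_2+s_3$ — so one must be careful about which residual appears: the contraction is driven by $\mathcal{G}_\gamma(x_1)=g_1+s_2$ bounding $F(x_1)-F_*$, and correctly setting up $E'_1$ (not $E'_2$) with the right index shift is the delicate modeling point. Second, guessing the correct perfect-square vector $v$ and confirming that a single square (rather than a positive-semidefinite combination of two or more squares) suffices is the nontrivial algebraic step; if one square does not suffice, one would need an additional interpolation inequality (e.g.\ involving $x_*$) with its own nonnegative multiplier. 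Given that the paper reports the rate matches the SDP value numerically, I expect a single square to work, and the verification of nonnegativity of all multipliers on $(0,2/L)$ to be the only remaining case check.
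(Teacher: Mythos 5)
Your plan coincides with the paper's actual proof: the paper certifies the rate via the combination $F(x_2)-F_*-\rho\,(F(x_1)-F_*)-\alpha_1 A_{1,2}-\beta_1 C_{1,2}-\lambda_1 E'_1=-\tfrac{1}{4L}\|L\gamma g_1+L\gamma s_2-g_1+g_2\|^2\le 0$ with $\alpha_1=\beta_1=1$ and $\lambda_1=\tfrac{\mu-\mu(L\gamma-1)^2}{L}$, i.e.\ exactly the square vector you guessed, using $E'_1$ (the index choice you correctly settle on in your closing paragraph) and needing no multiplier on $A_{2,1}$, $C_{2,1}$, or any constraint involving $x_*$. The only minor corrections to your anticipation are that the square's coefficient $\tfrac{1}{4L}$ is positive for every $\gamma$, so the restriction $\gamma\in(0,\tfrac{2}{L})$ enters solely through $\lambda_1\ge 0$ (i.e.\ $(L\gamma-1)^2\le 1$), and your simplification of the rate should read $\rho=1-\mu\gamma(2-L\gamma)$ rather than $1-\tfrac{\mu\gamma(2-L\gamma)}{L}$.
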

\begin{proof}

Let $\alpha_1 = \beta_1 = 1$, and $\lambda_1 = \frac{\mu-\mu\left(L\gamma-1\right)^2}{L}$.
By multiplying the inequalities by their respective factors and then summing them, we obtain that
\begin{equation}
\begin{aligned}
F(x_2) & - F_* - \rho (F(x_1) - F_*)-\alpha_1 A_{1,2} -\beta_1 C_{1,2} -\lambda_1 E'_1 \\
& = -\frac{1}{4L}\|Lg_1\gamma+L\gamma s_2-g_1+g_2\|^2 \leq 0.
\end{aligned} \nonumber
\end{equation}
where $A_{1,2}$ is defined in \eqref{subeq:L-interpo}, $C_{1,2}$ is defined in \eqref{subeq:cov-interpo}, and $E'_1$ is defined in \eqref{eq:Eprimal}. Because the terms after the nonnegative parameters $\alpha_1,\beta_1,\lambda_1$ are nonpositive, we have the desired result.
\end{proof}

The following proposition provides the ``optimal'' step size by minimizing the worst-case convergence rate in Theorem \ref{thm:noncov_RPL}, whose proof is similar to that of Proposition \ref{prop:1} and thus omitted.
\begin{proposition}
Suppose that $F$ satisfies the same conditions in Theorem \ref{thm:noncov_RPL}. Then the optimal step size for the PGM with the worst-case convergence rate in Theorem \ref{thm:noncov_RPL} is given by $\frac{1}{L}$ in $\gamma \in (0,\frac{2}{L})$.
\end{proposition}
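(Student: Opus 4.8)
The plan is to mimic the argument of Proposition \ref{prop:1}: treat the worst-case contraction factor in Theorem \ref{thm:noncov_RPL} as a function of the step size,
\[
\tau(\gamma) := \frac{L + \mu\left(L\gamma - 1\right)^2 - \mu}{L}, \qquad \gamma \in \left(0, \tfrac{2}{L}\right),
\]
and minimize it over the admissible interval. First I would rewrite $\tau$ in the transparent form $\tau(\gamma) = 1 + \frac{\mu}{L}\bigl((L\gamma-1)^2 - 1\bigr)$; since $\mu, L > 0$, minimizing $\tau$ over $\gamma$ is equivalent to minimizing the convex quadratic $\gamma \mapsto (L\gamma - 1)^2$.

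Next I would differentiate: $\tau'(\gamma) = 2\mu(L\gamma - 1)$, which is strictly negative for $\gamma \in (0, \frac{1}{L})$ and strictly positive for $\gamma \in (\frac{1}{L}, \frac{2}{L})$. Hence $\tau$ is strictly decreasing on $(0, \frac{1}{L}]$ and strictly increasing on $[\frac{1}{L}, \frac{2}{L})$, so its unique minimizer on $(0, \frac{2}{L})$ is $\gamma = \frac{1}{L}$. (Equivalently, one can simply observe $(L\gamma - 1)^2 \ge 0$ with equality if and only if $\gamma = \frac{1}{L}$.) Substituting back gives the corresponding optimal rate $\tau(\frac{1}{L}) = \frac{L - \mu}{L}$.

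There is essentially no genuine obstacle here: the only point requiring a remark is that the unconstrained minimizer $\frac{1}{L}$ of the underlying quadratic lies in the interior of the feasible interval $(0, \frac{2}{L})$, so no boundary case arises. This is even simpler than Proposition \ref{prop:1}, because the rate function is a single smooth quadratic across the whole interval rather than a piecewise expression, and thus no continuity argument at an interior breakpoint is needed.
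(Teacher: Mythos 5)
Your proof is correct and follows exactly the route the paper intends (it omits the proof, noting it is "similar to that of Proposition \ref{prop:1}"): treat the rate in Theorem \ref{thm:noncov_RPL} as $\tau(\gamma)=1+\frac{\mu}{L}\bigl((L\gamma-1)^2-1\bigr)$ and minimize the quadratic, giving the interior minimizer $\gamma=\frac{1}{L}$. The computation $\tau'(\gamma)=2\mu(L\gamma-1)$ and the resulting rate $\frac{L-\mu}{L}$ are both correct, so nothing is missing.
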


Under the RPL inequality \eqref{eq:RPL}, we provide the first explicit linear convergence rate of the PGM. Additionally, we show that the ``optimal'' step size is $\frac{1}{L}$ in this setting.

\subsection{Convex Composite Case}
Next, we consider the performance of the PGM under Condition \ref{con2} and assume the RPL inequality \eqref{eq:RPL} holds. Zhang and Zhang \cite{zhang2019new} proved a linear convergence rate for the PGM under the RPL inequality. By using PEP, we extend the step size interval to $(0,\frac{2}{L})$ and find a better convergence rate than that in \cite{zhang2019new}. 
We use the interpolation conditions of $\mathcal{F}_{0,\infty}(\mathbb{R}^d)$ in \eqref{subeq:cov-interpo} and $\mathcal{F}_{0,L}(\mathbb{R}^d)$ in \eqref{eq:Lconv}, respectively, and the discrete RPL inequality \eqref{eq:RPLdiscrete}.
Now we are ready to present our result.
\begin{theorem} \label{thm:cov_RPL}
Suppose that Condition \ref{con2} holds, $F$ satisfies the RPL inequality \eqref{eq:RPL} on $X=\{x:F(x)\leq F(x_1)\}$, and $x_2$
 is generated by Algorithm \ref{alg:the PGM}. Then the following holds.
\begin{enumerate}
  \item If $\gamma\in(0,\frac{1}{L}]$, then
$F(x_2) - F_* \leq \frac{1-\gamma \mu}{1+\gamma \mu}
 (F(x_1) - F_*).$
\item If $\gamma\in(\frac{1}{L},\frac{3}{2L}]$, then
$F(x_2) - F_* \leq \frac{-2L\gamma^2\mu+L\gamma+3\gamma\mu-2}{L\gamma-\gamma\mu-2}
 (F(x_1) - F_*).$
\item If $\gamma\in(\frac{3}{2L},\frac{2}{L})$, then
$F(x_2) - F_* \leq \frac{(L\gamma - 1)^2}{(L\gamma - 1)^2 - L \gamma^{2} \mu + 2 \gamma \mu}
 (F(x_1) - F_*).$
 \end{enumerate}
\end{theorem}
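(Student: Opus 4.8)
The plan is to run the PEP argument of Sections~\ref{sect2}--\ref{sect4} separately on each of the three step-size regimes. The discretized data are: the iteration relation \eqref{eq:iter}, $x_2=x_1-\gamma(g_1+s_2)$ with $s_2\in\partial h(x_2)$ and $g_*+s_*=0$; the $\mathcal{F}_{0,L}$-interpolation inequalities $B_{i,j}\le 0$ of \eqref{eq:Lconv} for the smooth convex part $f$; the $\mathcal{F}_{0,\infty}$-interpolation inequalities $C_{i,j}\le 0$ of \eqref{subeq:cov-interpo} for $h$; the discretized RPL inequality $E_1'\le 0$ of \eqref{eq:Eprimal} at $x_1$ (recall $\mathcal{G}_\gamma(x_1)=g_1+s_2$ by \eqref{eq:GMdiscrete}); and, since RPL implies the PL inequality \eqref{eq:sPL} by Lemma~\ref{lemma} and $F(x_2)\le F(x_1)$ for $\gamma\in(0,\frac{2}{L})$, the discretized PL inequality $D_2\le 0$ of \eqref{eq:PLdiscrete} at $x_2$. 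For each regime I would solve the associated SDP relaxation numerically over a grid of $(L,\mu,\gamma)$, identify the active constraints and optimal dual multipliers, and then fit closed-form weights $\alpha_1,\alpha_2,\beta_1,\lambda_1^D,\lambda_1^E\ge 0$ as rational functions of $L,\mu,\gamma$.

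Writing $\rho(\gamma)$ for the claimed contraction factor of the regime, the target is the identity
\begin{equation}
F(x_2)-F_*-\rho(\gamma)\bigl(F(x_1)-F_*\bigr)-\alpha_1 B_{1,2}-\alpha_2 B_{2,1}-\beta_1 C_{1,2}-\lambda_1^D D_2-\lambda_1^E E_1'=-c(\gamma)\,\|v\|^2, \nonumber
\end{equation}
with $v$ an explicit combination of $g_1,g_2,s_2$ (and $\gamma,L$) and $c(\gamma)\ge 0$ precisely on the stated subinterval; the vanishing and sign changes of $c(\gamma)$ at $\gamma=\frac{1}{L}$ and $\gamma=\frac{3}{2L}$ are exactly what dictate the three-way split. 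For $\gamma\in(0,\frac{1}{L}]$ the weights can be taken as $\alpha_1=\beta_1=\frac{1}{1+\gamma\mu}$, $\alpha_2=0$, $\lambda_1^D=\lambda_1^E=\frac{\gamma\mu}{1+\gamma\mu}$, which collapses the residual to $\frac{L\gamma-1}{2L(1+\gamma\mu)}\|g_1-g_2\|^2\le 0$; for $\gamma\in(\frac{3}{2L},\frac{2}{L})$ the dual certificate should coincide with the one already used in Theorems~\ref{thm:noncov_sPL}, \ref{thm:cov_PL} and \ref{thm:noncov_RPL}, with residual proportional to $\|L\gamma g_1+L\gamma s_2-g_1+g_2\|^2$; and the middle regime interpolates between the two. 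Once the identity holds, nonnegativity of all the weights and of the right-hand side gives $F(x_2)-F_*\le\rho(\gamma)(F(x_1)-F_*)$, exactly as in the earlier proofs.

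Finally I would check that the bound is continuous, hence a well-defined rate on $(0,\frac{2}{L})$: at $\gamma=\frac{1}{L}$ the first and second expressions both equal $\frac{L-\mu}{L+\mu}$, and at $\gamma=\frac{3}{2L}$ the second and third both equal $\frac{L}{L+3\mu}$. The main obstacle is the middle regime $\gamma\in(\frac{1}{L},\frac{3}{2L}]$: its rate $\frac{-2L\gamma^2\mu+L\gamma+3\gamma\mu-2}{L\gamma-\gamma\mu-2}$ has a $\mu$-dependent denominator, so its dual solution is a genuinely new certificate rather than one reused from elsewhere in the paper; guessing the right rational multipliers there and certifying both their nonnegativity and the single-square form of the residual over the whole subinterval is the delicate part. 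A secondary point is to confirm numerically that the three bounds are tight, i.e.\ equal to the optimal value of the SDP relaxation.
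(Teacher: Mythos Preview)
Your plan is correct and coincides with the paper's proof. Your $D_2$ at $x_2$ is exactly what the paper obtains by applying Lemma~\ref{lemma} to its $E_2$, and your $E_1'$ is the paper's $E_1$; the Case~1 weights and residual $\frac{L\gamma-1}{2L(1+\gamma\mu)}\|g_1-g_2\|^2$ match verbatim, and Case~3 indeed reuses the certificate of Theorem~\ref{thm:cov_PL}. For the middle regime the paper activates $B_{2,1}$ as well (so $\alpha_2>0$), takes the common denominator $-L\gamma+\gamma\mu+2$, and the residual collapses to an exact zero rather than a nontrivial negative square---this is the only detail your sketch leaves open.
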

\begin{proof}
1. First consider the case of $\gamma \in(0,\frac{1}{L}]$. Let
$\alpha_1 = \beta_1 = \frac{1}{1+\gamma\mu}, \lambda_1 = \lambda_2 = \frac{\gamma\mu}{1+\gamma\mu}.$
It is obvious that $\alpha_1, \beta_1, \lambda_1$, and $\lambda_2$ are nonnegative. Note that
\begin{equation} \label{eq:lemma}
\|g_{i+1}+s_{i+1}\|\geq d(0,\partial F(x_{i+1}))\geq\|\mathcal{G}_{i+1}\| \nonumber
\end{equation}
from Lemma \ref{lemma}, and $\|\mathcal{G}_i\|^2 = \|g_i+s_{i+1}\|^2$ in \eqref{eq:GMdiscrete}. We then have
\begin{equation}
\begin{aligned}
F(x_2) &- F_* - (1-\lambda_1-\lambda_2) (F(x_1) - F_*)-\alpha_1 B_{1,2} -\beta_1 C_{1,2} -\lambda_1 E_1 -\lambda_2 E_2 \\
\leq & -\alpha_1(\langle g_{2},x_{1}-x_{2}\rangle + \frac{1}{2L}\|g_{1}-g_{2}\|^{2}) -\beta_1\langle s_{2},x_{1}-x_{2}\rangle+\frac{\lambda_1}{2u}\|g_{1}+s_{2}\|^{2} \\
& +\frac{\lambda_2}{2u}\|g_{2}+s_{2}\|^{2} = (L\gamma-1)\|g_1-g_2\|^2 \leq 0,
\end{aligned} \nonumber
\end{equation}
where $B_{1,2}$ is defined in \eqref{eq:Lconv}, $C_{1,2}$ is defined in \eqref{subeq:cov-interpo}, $E_1$ and $E_2$ are defined in \eqref{eq:RPLdiscrete} and the last inequality holds since $\gamma \in(0,\frac{1}{L}]$. Because the terms after the nonnegative parameters $\alpha_1,\beta_1,\lambda_1$ and $\lambda_2$ are nonpositive, we have
\begin{equation}
F(x_2) - F_* \leq (1-\lambda_1-\lambda_2) (F(x_1) - F_*) = \frac{1-\gamma \mu}{1+\gamma \mu}
 (F(x_1) - F_*). \nonumber
\end{equation}

2. For the case of $\gamma \in(\frac{1}{L}, \frac{3}{2L}]$, let
$
\alpha_1 = \frac1{-L\gamma+\gamma\mu+2}, \quad \alpha_2 = \frac{L\gamma-1}{-L\gamma+\gamma\mu+2}, \quad \beta_1 = \frac{-L\gamma+2}{-L\gamma+\gamma\mu+2},
\lambda_1 = \frac{\gamma\mu\left(-2L\gamma+3\right)}{-L\gamma+\gamma\mu+2}, \quad \lambda_2 = \frac{\gamma\mu}{-L\gamma+\gamma\mu+2}. $
We can also check that $\alpha_1, \alpha_2, \beta_1, \lambda_1$, and $\lambda_2$ are nonnegative. Using the same discussions on Lemma \ref{lemma} and \eqref{eq:GMdiscrete}, we have
\begin{equation}
\begin{aligned}
F(x_2) & - F_* - (1-\lambda_1-\lambda_2) (F(x_1) - F_*)-\alpha_1 B_{1,2} -\alpha_2 B_{2,1} - \beta_1 C_{1,2} -\lambda_1 E_1 - \lambda_2 E_2 \\
\leq & -\alpha_1(\langle g_{2},x_{1}-x_{2}\rangle + \frac{1}{2L}\|g_{1}-g_{2}\|^{2})-\alpha_2(\langle g_{1},x_{2}-x_{1}\rangle + \frac{1}{2L}\|g_{2}-g_{1}\|^{2}) \\
& -\beta_1\langle s_{2},x_{1}-x_{2}\rangle + \frac{\lambda_1}{2u}\|g_{1}+s_{2}\|^{2}+\frac{\lambda_2}{2u}\|g_{2}+s_{2}\|^{2} = 0.
\end{aligned} \nonumber
\end{equation}
The remainder of the proof follows a similar approach to the first case.

3. For the case of $\gamma \in(\frac{3}{2L}, \frac{2}{L}]$, let
$\alpha_1 = \frac{L\gamma-1}{(L\gamma-1)^2-L\gamma^2\mu+2\gamma\mu}, \alpha_2 = \frac{-L^{2}\gamma^{2}+3L\gamma-2}{(L\gamma-1)^2-L\gamma^2\mu+2\gamma\mu},$
$\beta_1 = \frac{(L\gamma - 1)^2}{(L\gamma-1)^2-L\gamma^2\mu+2\gamma\mu}, \lambda_1 = \frac{-L\gamma^2\mu+2\gamma\mu}{(L\gamma-1)^2-L\gamma^2\mu+2\gamma\mu}.$
We can also check that $\alpha_1, \alpha_2, \beta_1$, and $\lambda_1$ are nonnegative. Therefore, we have
\begin{equation}
\begin{aligned}
F(x_2) & - F_* - \beta_1 (F(x_1) - F_*)-\alpha_1 B_{1,2} -\alpha_2 B_{2,1}-\beta_1 C_{1,2} - \lambda_1 E_1 \\
\leq & -\alpha_1(\langle g_{2},x_{1}-x_{2}\rangle + \frac{1}{2L}\|g_{1}-g_{2}\|^{2})-\alpha_2(\langle g_{1},x_{2}-x_{1}\rangle + \frac{1}{2L}\|g_{2}-g_{1}\|^{2})  \\
& -\beta_1\langle s_{2},x_{1}-x_{2}\rangle +\frac{\lambda_1}{2u}\|g_{1}+s_{2}\|^{2} \\
=& \frac{-2L\gamma+3}{2L\left(-L\gamma^2\mu+2\gamma\mu+\left(L\gamma-1\right)^2\right)}\|Lg_1\gamma+L\gamma s_2-g_1+g_2\|^2 \leq 0,
\end{aligned} \nonumber
\end{equation}
where the last inequality holds since $\gamma \in(\frac{3}{2L}, \frac{2}{L}]$. The remainder of the proof follows a similar approach to the first case.
\end{proof}

By minimizing the convergence rate in Theorem \ref{thm:cov_RPL}, the next proposition gives the ``optimal'' step size for the bound, which is shown to be in the interval  $(\frac{1}{L},\frac{3}{2L}]$. 
\begin{proposition} \label{prop:cov_optstepRPL}
Suppose that $F$ satisfies the same conditions in Theorem \ref{thm:cov_RPL}. Then the optimal step size for the bound in Theorem \ref{thm:cov_RPL} is either $\frac{3}{2L}$ if $\mu\le \frac{L}{9}$, or $\frac{2}{\sqrt L(\sqrt L+\sqrt\mu)}$ if $\frac{L}{9}<\mu<L$.
\end{proposition}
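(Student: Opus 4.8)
The plan is to regard the worst-case rate of Theorem~\ref{thm:cov_RPL} as a single piecewise function of the step size,
\[
\tau(\gamma)=\begin{cases}
\dfrac{1-\gamma\mu}{1+\gamma\mu}, & \gamma\in\bigl(0,\tfrac1L\bigr],\\[1ex]
\dfrac{-2L\gamma^{2}\mu+L\gamma+3\gamma\mu-2}{L\gamma-\gamma\mu-2}=:\tau_{2}(\gamma), & \gamma\in\bigl(\tfrac1L,\tfrac{3}{2L}\bigr],\\[1ex]
\dfrac{(L\gamma-1)^{2}}{(L\gamma-1)^{2}-L\gamma^{2}\mu+2\gamma\mu}, & \gamma\in\bigl(\tfrac{3}{2L},\tfrac2L\bigr),
\end{cases}
\]
and to minimize it over $\gamma\in(0,\tfrac2L)$. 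The first branch has derivative $-2\mu/(1+\gamma\mu)^{2}<0$, hence is strictly decreasing; the third branch is exactly the function occurring in Proposition~\ref{prop:cov_optstepPL}, which is strictly increasing on $(\tfrac{3}{2L},\tfrac2L)$ by the argument given there; and $\tau$ is continuous at $\gamma=\tfrac1L$ and at $\gamma=\tfrac{3}{2L}$, since at each breakpoint the two one-sided values agree (they equal $\tfrac{1-\gamma\mu}{1+\gamma\mu}$ and $\tfrac{L}{L+3\mu}$, respectively). Thus the whole problem reduces to locating the minimizer of $\tau_{2}$ on $\bigl(\tfrac1L,\tfrac{3}{2L}\bigr]$, after which the global minimizer of $\tau$ is obtained by gluing the three monotone pieces.

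I would then differentiate $\tau_{2}=N/D$ with $N(\gamma)=-2L\mu\gamma^{2}+(L+3\mu)\gamma-2$ and $D(\gamma)=(L-\mu)\gamma-2$ (note $D<0$ on the interval). A short expansion collapses the quotient rule to the identity
\[
N'(\gamma)D(\gamma)-N(\gamma)D'(\gamma)=-2\mu\bigl[L(L-\mu)\gamma^{2}-4L\gamma+4\bigr],
\]
so $\tau_{2}'(\gamma)$ has the sign opposite to that of the upward parabola $q(\gamma):=L(L-\mu)\gamma^{2}-4L\gamma+4$. The discriminant of $q$ equals $16L\mu$, with square root $4\sqrt{L\mu}$, so using $L-\mu=(\sqrt L-\sqrt\mu)(\sqrt L+\sqrt\mu)$ its roots simplify to
\[
\gamma_{-}=\frac{2}{\sqrt L(\sqrt L+\sqrt\mu)},\qquad \gamma_{+}=\frac{2}{\sqrt L(\sqrt L-\sqrt\mu)};
\]
since $0<\mu<L$ one checks $\tfrac1L<\gamma_{-}<\tfrac2L<\gamma_{+}$, so $q<0$ precisely on $(\gamma_{-},\gamma_{+})$. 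Hence $\tau_{2}$ is strictly decreasing on $\bigl(\tfrac1L,\gamma_{-}\bigr)$ and strictly increasing on $\bigl(\gamma_{-},\tfrac2L\bigr)$.

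Finally I would compare $\gamma_{-}$ with the breakpoint $\tfrac{3}{2L}$: a direct manipulation shows $\gamma_{-}\le\tfrac{3}{2L}$ is equivalent to $\sqrt L\le3\sqrt\mu$, i.e.\ to $\mu\ge\tfrac L9$. If $\mu\le\tfrac L9$, then $\gamma_{-}\ge\tfrac{3}{2L}$, so $\tau_{2}$ is nonincreasing on all of $\bigl(\tfrac1L,\tfrac{3}{2L}\bigr]$; together with the monotonicity of the first and third branches and the continuity above, $\tau$ is nonincreasing on $\bigl(0,\tfrac{3}{2L}\bigr]$ and strictly increasing on $\bigl(\tfrac{3}{2L},\tfrac2L\bigr)$, so its minimizer is $\tfrac{3}{2L}$. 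If $\tfrac L9<\mu<L$, then $\tfrac1L<\gamma_{-}<\tfrac{3}{2L}$, so $\tau_{2}$ attains its minimum on $\bigl(\tfrac1L,\tfrac{3}{2L}\bigr]$ at $\gamma_{-}$; gluing once more, $\tau$ is decreasing on $(0,\gamma_{-}]$ and increasing on $[\gamma_{-},\tfrac2L)$, so its minimizer is $\gamma_{-}=\tfrac{2}{\sqrt L(\sqrt L+\sqrt\mu)}$. This is precisely the claimed dichotomy. The main obstacle is the derivative of the middle branch: the quotient rule first yields a clutter of quadratic and linear terms, and the crux is that the degree-one terms cancel, leaving $-2\mu$ times the single quadratic $q$; from there, factoring $L-\mu$ and using $\sqrt{16L\mu}=4\sqrt{L\mu}$ gives the closed form for $\gamma_{-}$, and verifying that $\mu=\tfrac L9$ is the value where $\gamma_{-}$ meets $\tfrac{3}{2L}$ is a one-line check.
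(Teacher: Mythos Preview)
Your proposal is correct and follows essentially the same route as the paper: split $\tau$ into three pieces, show the first is decreasing and the third increasing, and analyze the middle branch via its derivative to locate the minimizer and compare it with $\tfrac{3}{2L}$. The only cosmetic difference is that the paper first observes $\tau_{2}''\ge0$ and then solves $\tau_{2}'=0$, whereas you go directly to the sign of $\tau_{2}'$ via the quadratic $q$ and its explicit roots; both arguments yield the same dichotomy at $\mu=L/9$.
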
 
\begin{proof}
Let
\begin{equation}
\tau(\gamma) =\begin{cases}\frac{1-\gamma \mu}{1+ \gamma \mu} & \gamma \in\left(0,\frac{1}{L}\right]\\
\frac{-2L\gamma^2\mu+L\gamma+3\gamma\mu-2}{L\gamma-\gamma\mu-2} & \gamma \in\left(\frac{1}{L},\frac{3}{2L}\right]\\
\frac{(L\gamma - 1)^2}{(L\gamma - 1)^2 - L \gamma^{2} \mu + 2 \gamma \mu}&\gamma \in(\frac{3}{2L},\frac{2}{L}).\end{cases} \nonumber
\end{equation}
First note that $\tau'(\gamma) < 0$ with $\gamma \in\left(0,\frac{1}{L}\right)$. So  $\tau(\gamma)$ is decreasing in $\left(0,\frac{1}{L}\right]$.

Second, we consider $\gamma \in\left(\frac{1}{L},\frac{3}{2L}\right]$. Through some algebra, we can show that $\tau''(\gamma) \geq 0$, then $\tau(\gamma)$ is convex on the given interval. Note that \[\tau'(\gamma) = (2\mu(-L^2\gamma^2 + L\gamma^2\mu + 4L\gamma - 4))/(-L\gamma + \gamma\mu + 2)^2.\]
When $\mu\le \frac{L}{9}$, we have $\tau'(\gamma)\le 0$ in $(\frac{1}{L},\frac{3}{2L})$, and thus $\tau(\gamma)$ is nonincreasing in $(\frac{1}{L},\frac{3}{2L}]$, which implies the minimizer in $(\frac{1}{L},\frac{3}{2L}]$ is the endpoint $\frac{3}{2L}$.
When $\frac{L}{9}<\mu\le L$,  we obtain that $\tau'(\frac{2}{\sqrt L(\sqrt L+\sqrt\mu)})=0$ with $\frac{2}{\sqrt L(\sqrt L+\sqrt\mu)}\in \left(\frac{1}{L},\frac{3}{2L}\right)$, and thus $\frac{2}{\sqrt L(\sqrt L+\sqrt\mu)}$ is the minimizer of $\tau(\gamma)$ in $\left(\frac{1}{L},\frac{3}{2L}\right]$.

Third, $\tau'(\gamma) > 0$ for $\gamma \in\left(\frac{3}{2L},\frac{2}{L}\right)$, so  $\tau(\gamma)$ is increasing in $\left(\frac{3}{2L},\frac{2}{L}\right)$.
 
Summarizing the above cases and noting that $\gamma(\tau)$ is continuous at $\frac{1}{L}$ and $\frac{3}{2L}$, we obtain the desired results.
\end{proof}

\textbf{Comparison.} Zhang and Zhang \cite{zhang2019new} showed a linear convergence rate for the PGM under the RPL inequality with $\gamma \in (0,\frac{1}{L}]$:
\begin{equation} \label{eq:zhang9}
F(x_{k+1}) - F_* \leq \frac{1-\gamma \mu}{1+\gamma \mu}
 (F(x_k) - F_*).
\end{equation}
A comparison of the convergence rate computed by PEP with the bound \eqref{eq:zhang9} is shown in Fig. \ref{fig:2}. The convergence rate in \cite{zhang2019new} is obtained  using a refined descent lemma on the interval $(0,\frac{1}{L}]$, while we extend the interval to $(0,\frac{2}{L})$.
Combined with Proposition \ref{prop:cov_optstepRPL}, we find that by choosing a longer step size, one can achieve a better convergence rate than \eqref{eq:zhang9}.

\begin{remark}
From Theorems \ref{thm:cov_PL} and \ref{thm:cov_RPL}, we can observe that for convex composite functions, whether satisfying the PL or RPL inequality, the convergence rates are the same when the step size $\gamma \in(\frac{3}{2L},\frac{2}{L})$ but different in other intervals of step sizes. 
We remark that the main difference between the analysis of the two cases is that \eqref{eq:PLdiscrete} is used for the PL case and \eqref{eq:Eprimal} is used for the RPL case.
\end{remark}

\section{Numerical Experiments}\label{sect5}
In this section, we compare the performance of different step size strategies for the PGM through numerical experiments on various practical problems.  

\subsection{Numerical comparisons under the PL inequality}
First, we consider a nonconvex composite problem, the sparse robust linear regression (SRLR) problem, which has been studied in prior work, such as Ochs et al. \cite{ochs2014ipiano}, Zhou et al. \cite{zhou2020proximal}, and Khanh et al. \cite{khanh2025inexact}. The problem is formulated as:
\begin{equation}
\min_{x\in\mathbb{R}^d} \sum_{k = 1}^{n} \log \left((Ax - b)_k^2 + 1\right) + \lambda\|x\|_1, \nonumber
\end{equation}
where $A\in\mathbb{R}^{n\times d}$, $b\in\mathbb{R}^n$, $\lambda>0$ is the regularization parameter, and $\|\cdot\|_1$ denotes the $\ell_1$ norm. We generate $A \in \mathbb{R}^{n \times d}$ and $b \in \mathbb{R}^n$ with independent and identically distributed (i.i.d.) standard normal entries, where $n = 200$ and $d = 20$, and set the regularization parameter to $\lambda = 0.1$. 
Fig. \ref{fig:3} shows the results of using PGM to solve the SRLR problem with different step sizes.
Particularly, we report the suboptimality gap, $F(x_i) - F(\hat{x})$, where $\hat{x}$ denotes the suboptimal solution obtained by running PGM for a large number of iterations with high tolerance. 

As shown in Fig. \ref{fig:3}, PGM initially exhibits sublinear convergence, and transitions to linear convergence as it approaches a limit point. Since the objective function is the sum of a logarithmic function composed with a polynomial and the $\ell_1$ norm, it possesses an o-minimal structure. According to Bolte et al.~\cite{bolte2007lojasiewicz}, functions with o-minimal structure satisfy the KL inequality. In particular, we verified that the Hessian of the smooth part of the objective is positive definite near the point of convergence, implying local strong convexity. This suggests that the KL exponent tends to $\frac{1}{2}$, confirming that the PL inequality holds in a neighborhood of the point of convergence. This observation explains the linear convergence behavior near the optimum. We further observe that the PGM with the ``optimal" step size $\frac{\sqrt{3}}{L}$, as stated in Proposition~\ref{prop:1}, outperforms other step sizes.

\begin{figure}[ht]
    \centering
    \begin{minipage}{0.45\textwidth}
        \centering
        \includegraphics[width=\linewidth]{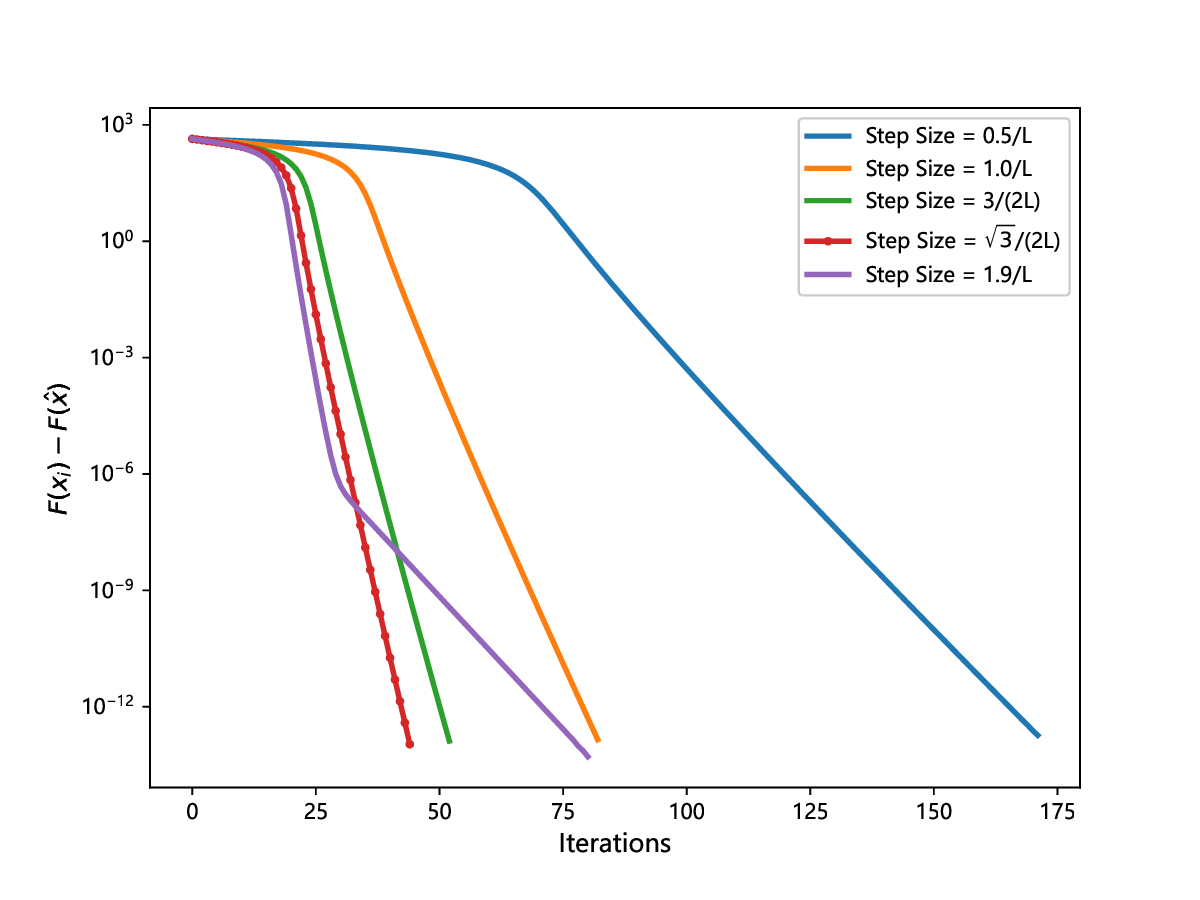}
\caption{Comparison on the SRLR problem.}
\label{fig:3}
    \end{minipage}
 \hspace{0.05\textwidth}
    \begin{minipage}{0.45\textwidth}
        \centering
        \includegraphics[width=\linewidth]{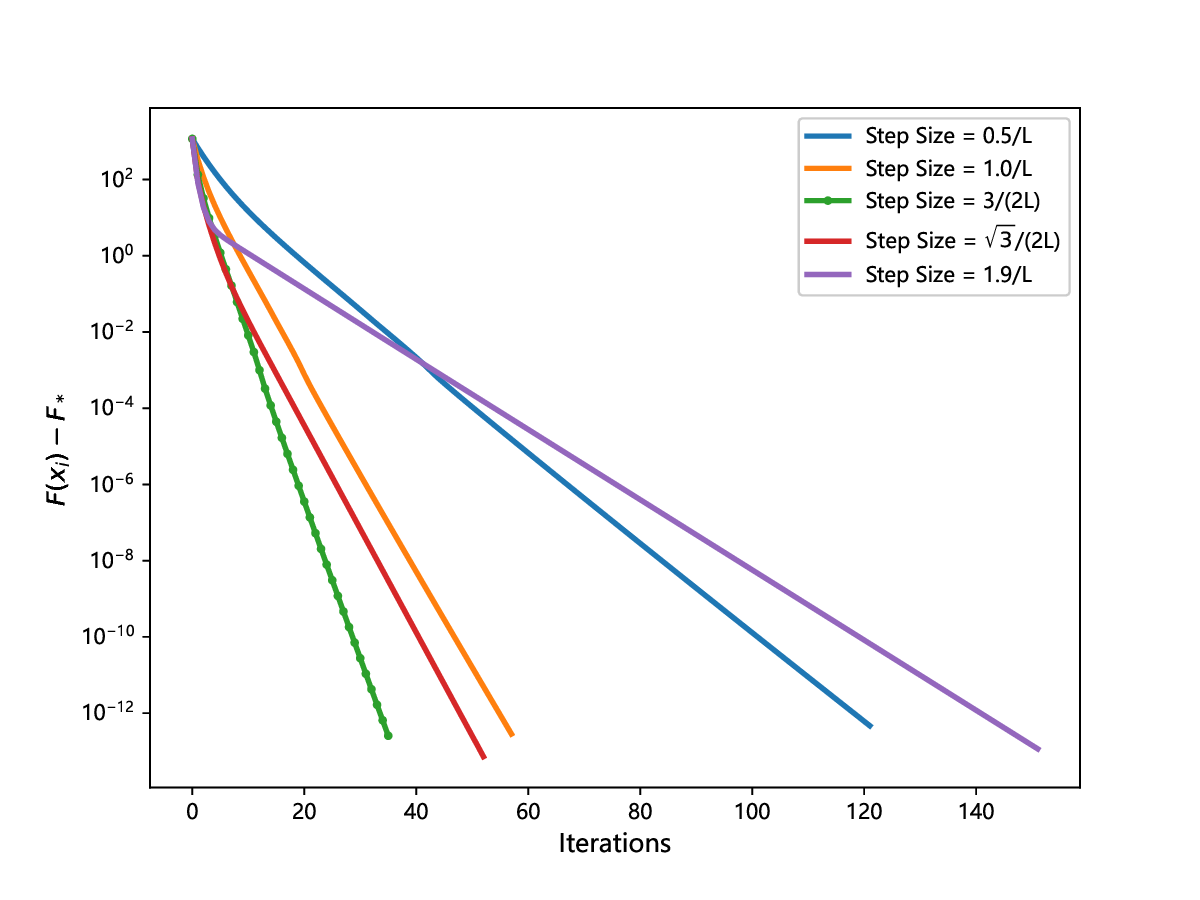}
\caption{Comparison on the LASSO problem.}
\label{fig:4}
    \end{minipage}
\end{figure}

Next, we consider a convex composite problem, the LASSO problem \cite{tibshirani1996regression}:
\begin{equation} \label{eq:LASSO}
\min_{x\in\mathbb{R}^d} \frac{1}{2}\|Ax-b\|^2 + \lambda\|x\|_1, \nonumber
\end{equation}
where $A\in\mathbb{R}^{n\times d}$, $b\in\mathbb{R}^n$, and $\lambda>0$. We generate $A$ and $b$ in the same way as in the SRLR problem and set $\lambda=0.1$. It is known that the objective function of the LASSO problem satisfies the KL inequality with exponent $\frac{1}{2}$ \cite{li2018calculus}, which coincides with the PL inequality discussed in this paper. 
The results of using PGM with different step sizes are shown in Fig. \ref{fig:4}. In particular, we report the optimality in the objective function, $F(x_i) - F_*$, with $F_*$ being the optimal solution computed by the solver CVX.  We observe that the PGM using the ``optimal" step size $\frac{3}{2L}$, as given in Proposition~\ref{prop:cov_optstepPL}, converges faster than other step sizes, including the widely used choice $\frac{1}{L}$. This illustrates the effectiveness of our theoretical step size in improving the convergence rate.

\subsection{Numerical comparisons under the RPL inequality}
Finally, we consider numerical comparisons under the RPL inequality.
However, we have not yet identified meaningful practical nonconvex composite problems that satisfy the RPL inequality.
As a result, we do not conduct numerical experiments for the nonconvex case. Instead, we focus on the convex setting and consider the elastic net problem introduced by Zou and Hastie~\cite{zou2005regularization}:
\begin{equation}
\min_{x\in\mathbb{R}^d} \frac{1}{2}\|Ax-b\|^2 + \lambda\|x\|_1+\frac{\delta}{2}\|x\|^2, \nonumber
\end{equation}
where $A\in\mathbb{R}^{n\times d}$, $b\in\mathbb{R}^n$, $\lambda>0$. We generate $A$ and $b$ in the same way as in the SRLR problem and set $\lambda=0.1$. This problem can be viewed as a combination of a smooth convex term $\frac{1}{2} \|Ax - b\|^2$ and a strongly convex regularizer $\lambda \|x\|_1 + \frac{\delta}{2} \|x\|^2$. The objective function is strongly convex and thus satisfies the RPL inequality, and $\mu = \lambda_{\min}(A^T A)+ \delta$.

\begin{figure}[ht]
    \centering
    \begin{minipage}{0.45\textwidth}
        \centering
        \includegraphics[width=\linewidth]{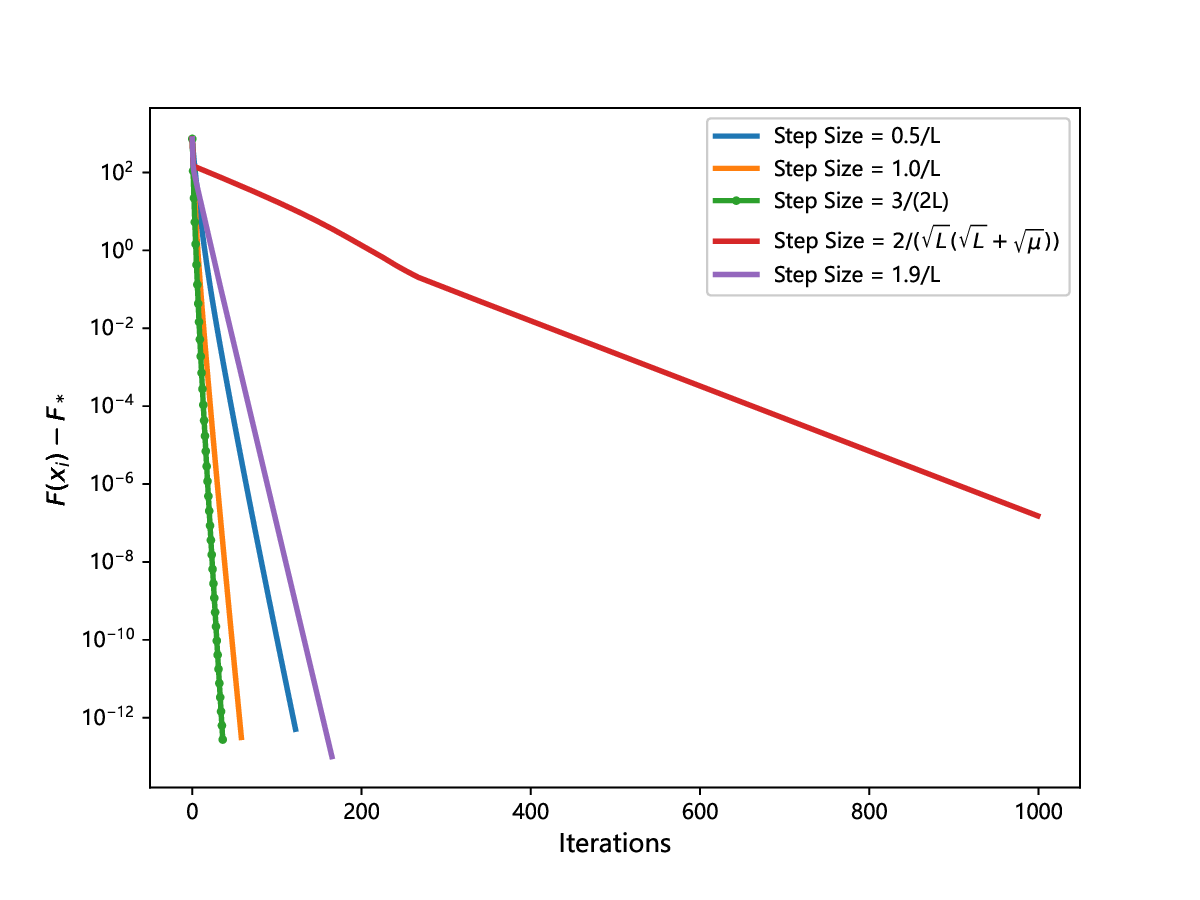}
\caption{Comparison on the elastic net problem ($\mu\le L/9$).}
\label{fig:61}
    \end{minipage}
 \hspace{0.05\textwidth}
    \begin{minipage}{0.45\textwidth}
        \centering
        \includegraphics[width=\linewidth]{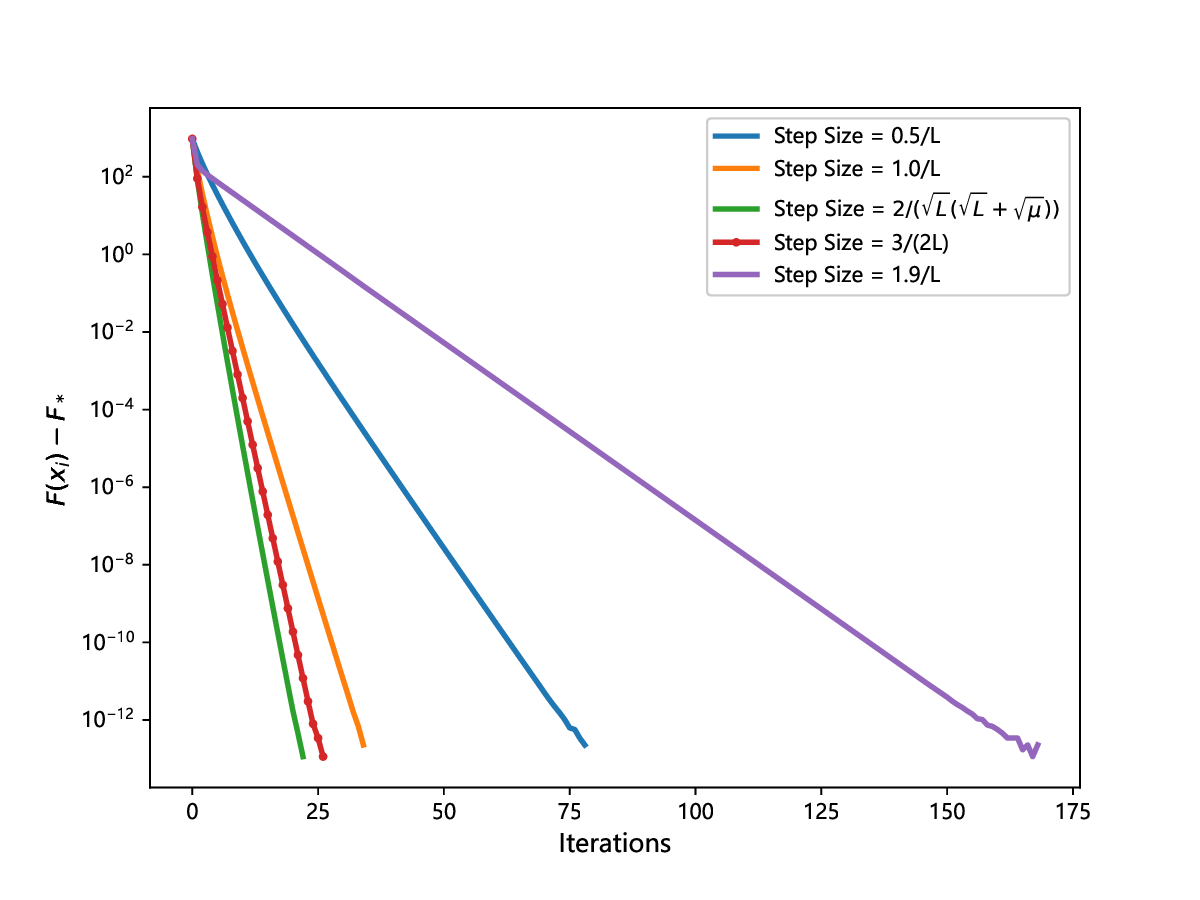}
\caption{Comparison  on the elastic net problem ($L/9<\mu<L$).}
\label{fig:62}
    \end{minipage}
\end{figure}

We conduct experiments using the same setup as in the LASSO problem and compare the performance of PGM with different step sizes. Note that the ``optimal'' step size is either $\frac{3}{2L}$ if $\mu\le \frac{L}{9}$, or $\frac{2}{\sqrt L(\sqrt L+\sqrt\mu)}$ if $\frac{L}{9}<\mu<L$ in Proposition \ref{prop:cov_optstepRPL}. We set $\delta = 10^{-2}$ in Fig. \ref{fig:61} to correspond to the case where $\mu \le \frac{L}{9}$, and $\delta = 100$ in Fig. \ref{fig:62} to correspond to the case where $\frac{L}{9} < \mu < L$. In particular, we computed that the step size $\gamma \approx \frac{1.991}{L}$ is close to $\frac{2}{L}$ when $\delta = 10^{-2}$, and $\gamma \approx \frac{1.404}{L} < \frac{3}{2L}$ when $\delta=100$. The results in both figures show that PGM with the ``optimal'' step size converges faster than with other step sizes, including the commonly used choice $\frac{1}{L}$.

\section{Conclusion}\label{sect6}
In this paper, we provide explicit convergence rates of the PGM  applied to composite functions using the PEP framework, focusing on two classes of the PL inequality. For nonconvex composite functions satisfying either the PL or RPL inequality, we present the first explicit linear convergence rate. Additionally, we derive tighter bounds on the linear convergence rate of PGM for minimizing convex composite functions under the PL or RPL inequality. Finally, we conduct numerical experiments to show that the theoretically ``optimal'' step sizes derived in this paper achieve better convergence rates than other step sizes.

We highlight that the necessary and sufficient interpolation conditions for functions satisfying the PL inequality remain unknown. Addressing this gap would enable the derivation of tight convergence rates for this class of functions, which we leave as future work.\\

\noindent\textbf{Data Availability} The paper contains no data.
\section*{Declarations}
\textbf{Conflict of interest} The authors have no confict of interest to disclose.



%
%





\bibliography{main.bib}

\end{document}